\newtheorem{theorem}{Theorem}
\newtheorem{lemma}{Lemma}
\newtheorem{corollary}{Corollary}
\newtheorem{proposition}{Proposition}
\theoremstyle{remark}
\newtheorem{remark}{\bf \em Remark}
\def\N{\mathbb{N}}
\def\Z{\mathbb{Z}}
\def\R{\mathbb{R}}
\def\P{\mathbb{P}}
\def\E{\mathbb{E}}
\def\LL{\mathcal{L}}
\def\sP{\mathsf P}
\renewcommand{\phi}{\varphi}
\renewcommand{\epsilon}{\varepsilon}
\newcommand{\1}{{\text{\Large $\mathfrak 1$}}}
\newcommand{\comp}{\raisebox{0.1ex}{\scriptsize $\circ$}}
\newcommand{\var}{\operatorname{var}}
\definecolor{mygray}{gray}{0.9}
\definecolor{deeppink}{RGB}{255,20,147}
\definecolor{mygreen}{rgb}{0.05, 0.576, 0.03}
\definecolor{myred}{rgb}{0.768, 0.09, 0.09}
\long\def\symbolfootnote[#1]#2{\begingroup
\def\thefootnote{\fnsymbol{footnote}}\footnote[#1]{#2}\endgroup}
\newcommand{\States}{\mathbb{I}}
\def\half{\frac{1}{2}}
\def\*{\allowbreak}
\def\n{^{(n)}}
\newcommand{\pd}[2]{\frac{\partial #1}{\partial #2}}
\def\Lip{\operatorname{Lip}}
\begin{document}
\title{\Large\bf Polynomial approximations to continuous
functions and stochastic compositions}
\date{}
\author{Takis Konstantopoulos
\thanks{\href{mailto:takiskonst@gmail.com}{\tt takiskonst@gmail.com};
Department of Mathematics, Uppsala University, SE-751 06 Uppsala, Sweden;
the work of this author was supported by Swedish Research Council
grant 2013-4688}
\and
Linglong Yuan
\thanks{\href{mailto:yuanlinglongcn@gmail.com}{\tt yuanlinglongcn@gmail.com};
Department of Mathematics, Uppsala University, SE-751 06 Uppsala, Sweden}
\and
Michael A. Zazanis
\thanks{\href{mailto:zazanis@aueb.gr}{\tt zazanis@aueb.gr};
Department of Statistics, 76 Patission St.,
Athens University of Economics, Athens 104 34, Greece}}
\maketitle

\begin{abstract}
This paper presents a stochastic approach to theorems
concerning the behavior of iterations of the Bernstein operator
$B_n$ taking a continuous function $f \in C[0,1]$ to a degree-$n$ 
polynomial when the number of iterations $k$ tends to infinity
and $n$ is kept fixed or when $n$ tends to infinity as well.
In the first instance, the underlying stochastic process
is the so-called Wright-Fisher model, whereas, in the second instance,
the underlying stochastic process is the Wright-Fisher diffusion.
Both processes are probably the most basic ones in mathematical genetics.
By using Markov chain theory and stochastic compositions, 
we explain probabilistically
a theorem due to Kelisky and Rivlin, and by using stochastic
calculus we compute a formula for the application of $B_n$ a number
of times $k=k(n)$ to a polynomial $f$ when $k(n)/n$ tends to 
a constant.
\end{abstract}

\section{Introduction}
About 100 years ago, Bernstein \cite{BER12} introduced a concrete
sequence of polynomials
approximating a continuous function on a compact interval.
That polynomials are dense in the set of continuous functions was shown 
by Weierstrass \cite{WEI1885},
but Bernstein was the first to give a concrete method,
one that has withstood the test of time.
We refer to \cite{PIN00} for a history of approximation theory,
including {\em inter alia}
historical references to Weierstrass' life and work and to the
subsequent work of Bernstein.
Bernstein's approach was probabilistic and is nowadays included in numerous
textbooks on probability theory, see, e.g., \cite[p.\ 54]{SHI84}
or \cite[Theorem 6.2]{BILL95}.

Several years after Bernstein's work, the nowadays known as Wright-Fisher
stochastic model was introduced and proved to be a founding one for the
area of quantitative genetics.
The work was done in the context of Mendelian genetics by
Ronald A.\ Fisher  \cite{FIS22,FIS30}
and Sewall Wright \cite{WRI31}.

This paper aims to explain the relation between the Wright-Fisher model
and the Bernstein operator $B_n$, that takes a function $f \in C[0,1]$
and outputs a degree-$n$ approximating polynomial.
Bernstein's original proof was probabilistic. It is thus natural
to expect that subsequent properties of $B_n$ can also be explained
via probability theory. In doing so, we shed new light
to what happens when we apply the Bernstein operator $B_n$
a large number of times $k$ to a function $f$. In fact, things
become particularly interesting when $k$ and $n$ converge simultaneously
to $\infty$. This convergence can be explained by means of
the original Wright-Fisher model as well as a continuous-time
approximation to it known as Wright-Fisher diffusion.

Our paper was inspired by the Monthly paper of Abel and Ivan \cite{AI09}
that gives a short proof of the Kelisky and Rivlin
theorem \cite{KR67} regarding the limit of the iterates of $B_n$
when $n$ is fixed.
We asked what is the underlying stochastic phenomenon that explains this
convergence and found that it is the composition of independent copies
of the empirical distribution function of $n$ i.i.d.\ uniform
random variables. The composition turns out to be
precisely the Wright-Fisher model. Being a Markov chain with absorbing states,
$0$ and $1$, its distributional limit is a random variable
that takes values in $\{0,1\}$; whence the Kelisky and Rivlin
theorem \cite{KR67}.

Composing stochastic processes is in line with the first author's current
research interests \cite{CK14}.
Indeed, such compositions often turn out
to have interesting, nontrivial, limits \cite{CM15}. Stochastic compositions
become particularly interesting when they explain some natural
mathematical or physical principles. This is what we do, in a particular
case, in this paper.
Besides giving fresh proofs to some phenomena, stochastic compositions
help find what questions to ask as well.

We will specifically provide probabilistic proofs for a number of
results associated to the Bernstein operator \eqref{Bn}.
First, we briefly recall Bernstein's probabilistic proof
(Theorem \ref{Berthm}) that says that $B_n f$ converges uniformly to $f$
as the degree $n$ converges to infinity.
Second, we look at iterates $B_n^k$ of $B_n$, meaning that we compose
$B_n$ $k$ times with itself and give a probabilistic proof
of the Kelisky and Rivlin theorem stating that $B_n^k f$ converges
to $B_1 f$ as the number of iterations $k$ tends to infinity (Theorem
\ref{krthm}).
Third, we exhibit, probabilistically, a geometric rate of convergence
to the Kelisky and Rivlin theorem (Proposition \ref{krRATE}).
Fourth, we examine the limit of $B_n^k f$ when both $n$ and $k$
converge to infinity in a way that $k/n$ converges to a constant 
(Theorem \ref{JOINTLIMSthm}) and
show that probability theory gives us a way to prove and set up
computation methods for the limit for ``simple'' functions
$f$ such as polynomials (Proposition \ref{POLY}).
A crucial step is the so-called Voronovskaya's theorem (Theorem \ref{vorthm})
which gives a rate of convergence to Bernstein's theorem
but also provides the reason why the Wright-Fisher model converges to
the Wright-Fisher diffusion; this is explained in Section  \ref{secwfdiff}.

Regarding notation, we let $C[0,1]$ be the set of continuous functions
$f: [0,1]\to \R$, and $C^2[0,1]$ the set of functions having a continuous
second derivative $f''$, including the boundary points, so $f''(0)$
(respectively, $f''(1)$) is 
interpreted as derivative from the right (respectively, left).
For a bounded function $f:[0,1] \to \R$, we denote by $\|f\|$ the
quantity $\sup_{0\le x \le 1} |f(x)|$.

\section{Recalling Bernstein's theorem}
The Bernstein operator $B_n$ maps any function $f:[0,1]\to \R$
into the polynomial
\begin{equation}
\label{Bn}
B_n f(x) := \sum_{j=0}^n \binom{n}{j}
x^j (1-x)^{n-j} f\left(\frac{j}{n}\right).
\end{equation}
%
We are mostly interested in viewing $B_n$ as an operator on $C[0,1]$.
Bernstein's theorem is:
\begin{theorem}[Bernstein, 1912]
\label{Berthm}
If $f \in C[0,1]$ then $B_n f$ converges uniformly to $f$:
\[
\lim_{n \to \infty} \max_{0\le x \le 1} |B_nf(x)-f(x)| =0.
\]
\end{theorem}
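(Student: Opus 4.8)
The plan is to use exactly the probabilistic argument that Bernstein himself devised, since the paper is explicitly advertising the stochastic viewpoint. First I would fix $x \in [0,1]$ and introduce a sequence $X_1, X_2, \dots$ of i.i.d.\ Bernoulli random variables with $\P(X_i = 1) = x$, so that $S_n := X_1 + \cdots + X_n$ is Binomial$(n,x)$. The key identity is that $B_n f(x) = \E\, f(S_n/n)$; this is immediate from \eqref{Bn} since $\P(S_n = j) = \binom{n}{j} x^j (1-x)^{n-j}$. Thus $B_n f(x) - f(x) = \E\big[f(S_n/n) - f(x)\big]$, and the whole problem is reduced to showing that the law of large numbers kicks in uniformly in $x$.

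Next I would exploit the uniform continuity of $f$ on the compact interval $[0,1]$: given $\epsilon > 0$ there is $\delta > 0$ with $|f(s) - f(t)| \le \epsilon$ whenever $|s - t| \le \delta$. Then I split the expectation according to whether $|S_n/n - x| \le \delta$ or not:
\[
|B_n f(x) - f(x)| \le \E\big|f(S_n/n) - f(x)\big|
\le \epsilon + 2\|f\|\, \P\big(|S_n/n - x| > \delta\big).
\]
The remaining probability is controlled by Chebyshev's inequality together with the variance computation $\var(S_n/n) = x(1-x)/n \le 1/(4n)$, giving $\P(|S_n/n - x| > \delta) \le 1/(4n\delta^2)$, a bound that does not depend on $x$. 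Hence $\max_{0 \le x \le 1} |B_n f(x) - f(x)| \le \epsilon + \|f\|/(2n\delta^2)$, and letting $n \to \infty$ and then $\epsilon \downarrow 0$ finishes the proof.

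The only genuinely delicate point is making sure every estimate is uniform in $x$: the choice of $\delta$ comes from uniform (not merely pointwise) continuity, and the tail bound must be $x$-free, which is why I bound $x(1-x)$ by $1/4$ rather than leaving it as is. Everything else is routine. An alternative, completely deterministic route would be the standard three-moments argument — checking $B_n 1 = 1$, $B_n(\mathrm{id}) = \mathrm{id}$, $B_n(\mathrm{id}^2) \to \mathrm{id}^2$ and invoking a Korovkin-type estimate — but since the paper's whole theme is the stochastic interpretation, the Bernoulli/Chebyshev proof is the natural one to present here.
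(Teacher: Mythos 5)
Your proof is correct and is essentially the same argument as the paper's: the identity $B_n f(x)=\E f(S_n/n)$ for a Binomial$(n,x)$ sum, a split of the expectation via uniform continuity, and a Chebyshev bound with $x(1-x)\le 1/4$ to make the tail estimate uniform in $x$. No changes needed.
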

The proof of this theorem is elementary if probability theory is used
and goes like this. Let $X_1, \ldots, X_n$ be independent 
Bernoulli random variables with $\P(X_i=1)=x$, $\P(X_i=0)=1-x$
for some $0\le x \le 1$.
If $S_n$ denotes the number of variables with value $1$ then
$S_n$ has a binomial distribution:
\begin{equation}
\label{binomdist}
\P(S_n=j) = \binom{n}{j} x^j (1-x)^{n-j}, \quad j =0,1,\ldots n.
\end{equation}
Therefore 
\begin{equation}
\label{BnE} 
\E f(S_n/n) = \sum_{j=0}^n f(j/n) \P(S_n=j) = B_n f(x).
\end{equation}
Now let 
\[
m(\epsilon) := \max_{|x-y|\le \epsilon} |f(x)-f(y)|,
\quad 0<\epsilon <1.
\]
Since $f$ is continuous on the compact set $[0,1]$ it is also 
uniformly continuous and so
$m(\epsilon) \to 0$ as $\epsilon \to 0$.
Let $A$ be the event that $|S_n/n- x| \le \epsilon$
and $\1_A$ the indicator of $A$ (a function that is $1$ on $A$ and $0$ on
its complement).
We then write
\[
\E |f(S_n/n) - f(x)|
= \E\left( |f(S_n/n) - f(x)| \1_A\right)
+ \E\left( |f(S_n/n) - f(x)| \1_{A^c}\right)
\le m(\epsilon) + 2 \| f\| \, \P(A^c).
\]
By Chebyshev's inequality,
\[
\P(A^c) = \P(|S_n-nx|\ge n\epsilon|) \le
(n\epsilon)^{-2}\, \E (S_n-n x)^2
= (n\epsilon)^{-2}\, n x(1-x) \le \frac{1}{4} \epsilon^{-2} n^{-1}.
\]
Therefore,
\[
\E |f(S_n/n) - f(x)|
\le m(\epsilon) + \frac{\| f\|}{2\epsilon^2 n}.
\]
Letting $n \to \infty$ the last term goes to $0$ and letting $\epsilon \to 0$
the first term vanishes too, thus establishing the theorem.

\begin{remark}
A variant of Bernstein's theorem due to Marc Kac \cite{KAC37}
gives better estimate if $f$ is Lipschitz or, more generally, H\"older
continuous. Indeed, if $f$ satisfies $|f(x)-f(y)| \le C |x-y|^\alpha$
for some $0<\alpha \le 1$ then,
\[
\E |f(S_n/n) - f(x)|
\le C\, \E |S_n/n - x|^\alpha
\le C\, (\E |S_n/n - x|^2 )^{\alpha/2}
= C (n^{-1} x(1-x))^{\alpha/2}
\le \frac{C\, 2^{-\alpha}}{n^{\alpha/2}},
\]
where the first inequality used the H\"older continuity of $f$,
while the second used Jensen's inequality twice; indeed,
if $Z$ is a positive random variable then
$\E Z^\beta \le (\E Z)^\beta$, by the concavity of the function
$z \mapsto z^\beta$ when $0< \beta < 1$.
\end{remark}
\begin{remark}
H\"older continuous functions with small $\alpha < 1$ are ``rough'' functions.
The previous remark tells us that we may not have a good rate of
convergence for these functions. On the other hand, if $f$ is smooth
can we expect a good rate of convergence? A simple calculation
with $f(x)=x^2$ shows that $B_n f(x) = \E (S_n/n)^2
= (\E(S_n/n))^2 + \var(S_n/n) = x^2 + \frac{1}{n} x(1-x)$.
Excluding the trivial case $f(x)=ax+b$ (the only functions $f$ for
which $B_nf =f$),
can the rate of convergence be better than $1/n$ for some 
smooth function $f$?
No, and this is due to Voronovskaya's theorem (Theorem \ref{vorthm} in
Section \ref{Vsec}).
\end{remark}

\begin{remark}[Some properties of the Bernstein operator]
\label{PJ}
$~$\\(i) $B_n$ is an increasing operator: 
If $f \le g$ then $B_nf \le B_n g$.
(Proof: $B_n f$ is an expectation; see \eqref{BnE}.)
\\
(ii) If $f$ is a convex function then $B_nf \ge f$.
Indeed, $B_n f(x) = \E f(S_n/n) \ge f(\E(S_n/n))$,
by Jensen's inequality, and, clearly, $\E(S_n/n)=x$.
\\
(iii)
If $f$ is a convex function than $B_n f$ is
a also convex.
See Lemma \ref{Bconv} in Section \ref{Isec} for a proof.
\end{remark}

\section{Iterating Bernstein operators}
\label{Isec}
Let $B_n^2 := B_n \comp B_n$ be the composition of $B_n$ with itself
and, similarly, let $B_n^k := B_n \comp \cdots \comp B_n$ ($k$ times).
Abel and Ivan \cite{AI09} give a short proof of the following.
\begin{theorem}[Kelisky and Rivlin, 1967]
\label{krthm}
For fixed $n \in \N$, 
and any function $f: [0,1]\to \R$,
\[
\lim_{k \to \infty} \max_{0 \le x \le 1}|B^k_n f(x) -f(0)-(f(1)-f(0))x| =0.
\]
\end{theorem}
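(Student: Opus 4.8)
The plan is to exploit the probabilistic representation announced in the introduction: the Bernstein operator is a conditional expectation, so iterating it corresponds to composing independent copies of a random map, and the resulting process is a Markov chain on $[0,1]$. First I would set up the chain. Fix $n$. Given $x\in[0,1]$, let $F_1,F_2,\dots$ be i.i.d.\ copies of the empirical distribution function of $n$ i.i.d.\ uniforms, or equivalently note that $B_n f(x)=\E f(S_n/n)$ where $S_n\sim\text{Binomial}(n,x)$ as in \eqref{BnE}. Define a Markov chain $(Y_k)_{k\ge 0}$ on the state space $\{0,\tfrac1n,\tfrac2n,\dots,1\}$ by $Y_0=x$ (or, after one step, by $Y_1=S_n/n$) and transition probabilities $\P(Y_{k+1}=j/n\mid Y_k=i/n)=\binom{n}{j}(i/n)^j(1-i/n)^{n-j}$. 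The key identity, proved by induction on $k$ using the tower property, is
\[
B_n^k f(x)=\E_x f(Y_k),
\]
so the theorem reduces to computing $\lim_{k\to\infty}\E_x f(Y_k)$ uniformly in $x$.

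Next I would analyze the chain. The states $0$ and $1$ are absorbing, since $\P(Y_{k+1}=0\mid Y_k=0)=1$ and likewise at $1$; every other state $i/n$ with $0<i<n$ leads with positive probability to $0$ (take $j=0$, probability $(1-i/n)^n>0$), so the chain is absorbed in $\{0,1\}$ almost surely, and $Y_k\to Y_\infty\in\{0,1\}$ a.s. It remains to identify $\P_x(Y_\infty=1)$. Here I would use that the coordinate process is a bounded martingale: $\E[Y_{k+1}\mid Y_k]=Y_k$ because the mean of a $\text{Binomial}(n,Y_k)/n$ equals $Y_k$. By optional stopping / martingale convergence, $\E_x Y_\infty=x$, and since $Y_\infty$ takes only the values $0$ and $1$, $\P_x(Y_\infty=1)=x$ and $\P_x(Y_\infty=0)=1-x$. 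Therefore $\E_x f(Y_\infty)=f(0)(1-x)+f(1)x=f(0)+(f(1)-f(0))x$, which is exactly $B_1 f(x)$.

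Finally I would upgrade pointwise convergence to uniform convergence. Since $|f|\le\|f\|$ and $f(Y_k)\to f(Y_\infty)$ a.s., bounded convergence gives $B_n^k f(x)\to f(0)+(f(1)-f(0))x$ for each $x$; to get it uniformly, I would observe that the whole family $\{B_n^k f-B_1 f:k\ge 0\}$ consists of polynomials of degree at most $n$, and on a finite-dimensional space pointwise convergence at $n+1$ distinct points (say $0,\tfrac1n,\dots,1$) forces convergence of coefficients and hence uniform convergence on $[0,1]$; alternatively one can get a direct estimate on $\P_x(\text{not yet absorbed by time }k)$ that is bounded by $(1-c_n)^{\lfloor k/?\rfloor}$ with $c_n>0$ uniform in $x$ (this is essentially the content of Proposition \ref{krRATE}). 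The only mildly delicate point is the uniform-in-$x$ absorption bound — for $x$ very close to $0$ or $1$ the one-step escape probabilities degenerate — but since $B_1 f$ agrees with the limit exactly at $x=0$ and $x=1$ and the error is a degree-$n$ polynomial, the finite-dimensional argument sidesteps this cleanly. I expect the martingale identification of $\P_x(Y_\infty=1)=x$ to be the conceptual heart of the proof, and the passage to uniformity to be routine once the state space is recognized as effectively finite-dimensional.
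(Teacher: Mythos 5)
Your proposal is correct and follows essentially the same route as the paper: represent $B_n^k f(x)$ as $\E f$ of a Markov chain on $\{0,\tfrac1n,\dots,1\}$ built from composed empirical distribution functions, use almost-sure absorption at $\{0,1\}$ together with the mean-preservation property $\E_x Y_k = x$ to identify the absorption probabilities, and then deduce uniform convergence from the fact that all iterates are polynomials of degree at most $n$. Your closing remark about a quantitative absorption bound is exactly what the paper carries out separately in Proposition \ref{krRATE}.
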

\begin{remark}
Note that this says that $B_n^k f(x) \to B_1 f(x)$, as $k\to \infty$,
uniformly in $x \in [0,1]$.
If $f$ is convex
by Remark \ref{PJ}(ii), $B_n f \ge f$. 
By Remark \ref{PJ}(i), $B_n^k f(x)$ is increasing in $k$
and hence the limit of Theorem \ref{krthm} is actually achieved
by an increasing sequence of convex functions (Remark \ref{PJ}(iii)).
\end{remark}
\begin{proof}[A probabilistic proof for Theorem \ref{krthm}] 
To prepare the ground, we construct
the earlier Bernoulli random variables in a different way. 
We take $U_1, \ldots, U_n$ to be independent
random variables, all uniformly distributed on the interval $[0,1]$,
and their {\em empirical distribution function}
\begin{equation}
\label{Gdef}
G_n(x) := \frac{1}{n} \sum_{j=1}^n \1_{U_j \le x} = \frac{1}{n}S_n(x),
\quad 0 \le x \le 1.
\end{equation}
We shall think of $G_n$ as a random function.
Note that, for each $x$, $S_n(x)$ has the binomial 
distribution \eqref{binomdist}.
The advantage of the current representation is that $x$, instead of being
a parameter of the probability distribution, is now an explicit parameter
of the new random object $G_n(x)$.
We are allowed to (and we will) pick a sequence of independent copies
$G_n^1, G_n^2, \ldots$ of $G_n$.
For a positive integer $k$ let
\begin{equation}
\label{Amarkov}
H^k_n := G_n^k \comp G_n^{k-1} \comp \cdots \comp G_n^1
\end{equation}
be the composition of the first $k$ random functions. So $H_n^k$ is
itself a random function.
By using the independence and the definition of $B_n$ we have that
(See also Section A1 in the Appendix)
\begin{equation}
\label{Bkf}
\E f(H_n^k(x)) = B_n^k f(x), \quad 0 \le x \le 1,
\end{equation}
for any function $f$.
Hence
the limit over $k \to \infty$ of the right-hand side
is the expectation of the limit of the random variable $f(H_n^k(x))$,
if this limit exists.
(We make use of the fact that \eqref{Bkf} is a finite sum!)
To see that this is the case, we fix $n$ and $x$ and consider the
sequence
\[
H_n^k(x), \quad k=1,2,\ldots
\]
with values in 
\[
\States_n:= \left\{0, \frac{1}{n}, \ldots, \frac{n-1}{n}, 1 \right\}.
\]
We observe that this has the Markov property,\footnote{Admittedly, 
it is a bit unconventional to use an upper index
for the time parameter of a Markov chain but, in our case,
we keep it this way because it appears naturally in the
composition operation.}
namely, $H_n^{k+1}(x)$ is independent of $H_n^1(x),\*\ldots,\* H_n^{k-1}(x)$,
conditional on $H_n^k(x)$.
By \eqref{binomdist},
the one-step transition probability of this Markov chain is
\begin{equation}
\label{tranprob}
p\left(\frac{i}{n},\frac{j}{n}\right):=
\P\left(H_n^{k+1}(x) = \frac{j}{n} \mid H_n^k(x) = \frac{i}{n}\right)
=\binom{n}{j}\left(\frac{i}{n}\right)^j \left(1-\frac{i}{n}\right)^{n-j}.
\end{equation}
Since $p(0,0)=p(1,1)=1$, states $0$ and $1$ are absorbing, whereas
for any $x \in \States_n\setminus\{0,1\}$, we have $p(x,y) > 0$
for all $y \in \States_n$.
Define the absorption time
\[
T(x):=\inf\big\{k \in \N:\, H^k_n(x) \in \{0,1\}\big\}.
\]
Elementary Markov chain theory
\cite[Ch.\ 11]{GS97} 
tells us that
\begin{lemma}
\label{MarkovElem}
For all $x$, $\P(T(x)<\infty)=1$.
\end{lemma}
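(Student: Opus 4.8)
The plan is to show that the chain $H_n^k(x)$ started from any state
$i/n \in \States_n \setminus \{0,1\}$ reaches the absorbing set $\{0,1\}$
almost surely by the standard ``uniform escape probability'' argument for
finite Markov chains. The key observation, already recorded just before the
lemma, is that from any non-absorbing state $i/n$ the transition
probability $p(i/n,0) = (1-i/n)^n$ is strictly positive, and in fact
bounded below by $\delta := \min_{1 \le i \le n-1}(1-i/n)^n = n^{-n} > 0$,
uniformly over all non-absorbing states. Hence from any state the chain has
probability at least $\delta$ of being absorbed (at $0$) in a single step.

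The key steps are as follows. First I would fix $n$ and note that
$\States_n$ is finite and $\{0,1\}$ is absorbing, so it suffices to bound
the tail of $T(x)$. Second, using the Markov property and the uniform lower
bound $\delta$ above, I would show by induction on $m$ that
$\P(T(x) > m) \le (1-\delta)^m$ for every starting point $x$: conditioning
on $H_n^m(x)$, if the chain has not yet been absorbed then it sits at some
non-absorbing state, from which it jumps to $0$ with probability at least
$\delta$, so $\P(T(x) > m+1 \mid T(x) > m) \le 1-\delta$. Third, letting
$m \to \infty$ gives $\P(T(x) = \infty) = \lim_m \P(T(x) > m) = 0$, which is
exactly the claim. (Alternatively one could simply cite the general fact
that a finite-state Markov chain is absorbed in finite time with
probability one whenever every state communicates with the absorbing set,
as the reference \cite[Ch.\ 11]{GS97} does; the explicit bound is included
because it also yields the geometric rate needed later for
Proposition~\ref{krRATE}.)

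I do not expect any real obstacle here: the only thing to be careful about
is the uniformity of the escape probability over starting states, which is
immediate because there are only finitely many non-absorbing states and the
relevant one-step probabilities are all strictly positive. The trivial
cases $x \in \{0,1\}$ need a one-line remark, since then $T(x) = 1$ (or
$T(x)=\infty$ under the convention that the infimum is over $k$ with the
chain first hitting $\{0,1\}$, depending on whether one counts the start);
in either reading $\P(T(x)<\infty)=1$ holds trivially. This also makes
transparent why the limit in Theorem~\ref{krthm} exists: $H_n^k(x)$
converges almost surely to an $\{0,1\}$-valued random variable, and the
finite sum \eqref{Bkf} lets us pass the limit inside the expectation.
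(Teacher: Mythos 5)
Your argument is correct and is essentially the standard absorption argument for finite Markov chains that the paper simply delegates to the citation \cite[Ch.~11]{GS97}; spelling out the uniform escape probability $\delta=n^{-n}$ and the geometric tail bound is a harmless (indeed useful) addition, since it also feeds Proposition~\ref{krRATE}. The only cosmetic point is the general starting value $x\in(0,1)\setminus\States_n$, for which the one-step probability of hitting $0$ is $(1-x)^n$ and may fall below your $\delta$; but $H_n^1(x)\in\States_n$ almost surely, so you simply start the induction after the first step (or note that the first-step absorption probability $x^n+(1-x)^n\ge 2^{1-n}\ge n^{-n}$), and the conclusion $\P(T(x)<\infty)=1$ is unaffected.
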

Therefore, with probability $1$, we have that
\[
H_n^k(x) = H^{T(x)}_n(x)=:W(x), \quad \text{for all but finitely many $k$}.
\]
Hence $f(H_n^k(x)) = f(W(x))$ for all bur finitely many $k$ and so
\[
\lim_{k \to \infty} \E f(H_n^k(x)) = \E f(W(x)).
\]
But the random variable $W(x)$ takes two values: $0$ and $1$.
Notice that $\E H_n^k(x)=x$ for all $k$. Hence $\E W(x)=x$.
But $\E W(x) = 1\times \P(W(x)=1) + 0 \times \P(W(x)=0)$. Thus
$\P(W(x)=1)=x$, and $\P(W(x)=0)=1-x$.
Hence
\begin{equation} \label{B1}
\lim_{k \to \infty} \E f(H_n^k(x)) = f(0) (1-x) + f(1) x.
\end{equation}
This proves the announced limit of Theorem \ref{krthm}
but without the uniform convergence.
However, since all polynomials of the sequence are of
degree at most $n$, and $n$ is a fixed number, convergence for each $x$ implies
convergence of the coefficients of the polynomials.
\end{proof}

To prove a rate of convergence in Theorem \ref{krthm} we need to
show what was announced in Remark \ref{PJ}(iii). Recall that $G_n(x)=
S_n(x)/n$.
\begin{lemma}[Convexity preservation]
\label{Bconv}
If $f$ is convex then so is $B_n f$.
\end{lemma}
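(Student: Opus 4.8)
The plan is to establish convexity of $B_n f$ directly from the classical finite‑difference formula for the derivatives of a Bernstein polynomial. Writing $\Delta_h g(t) := g(t+h)-g(t)$ for the forward difference with step $h=1/n$, one has the well‑known identity
\[
(B_n f)''(x) = n(n-1) \sum_{j=0}^{n-2} \binom{n-2}{j} x^j (1-x)^{n-2-j}\, \Delta_{1/n}^2 f\!\left(\frac{j}{n}\right),
\]
where $\Delta_{1/n}^2 f(j/n) = f((j+2)/n) - 2f((j+1)/n) + f(j/n)$ is the second forward difference. If $f$ is convex, every such second difference is nonnegative, so the right‑hand side is a nonnegative combination of nonnegative terms; hence $(B_n f)'' \ge 0$ on $[0,1]$ and $B_n f$ is convex. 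This is clean but uses a differentiation identity whose verification, while standard, is a small computation; I would prefer a proof that stays within the probabilistic framework of the paper.

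So the route I would actually take is the following. It suffices to show that for any convex $f$ and any $0\le x<y\le 1$ and $\lambda\in(0,1)$ with $z=\lambda x+(1-\lambda)y$ we have $B_n f(z)\le \lambda B_n f(x)+(1-\lambda) B_n f(y)$. Using the representation $B_n f(x)=\E f(S_n(x)/n)$ from \eqref{BnE}, the idea is to build the three binomial variables $S_n(x), S_n(y), S_n(z)$ on a common probability space by a coupling in which $S_n(z)$ is, conditionally on a coin flip, distributed as $S_n(x)$ or $S_n(y)$. Concretely: take $U_1,\dots,U_n$ i.i.d.\ uniform on $[0,1]$ and an independent sequence $\xi_1,\dots,\xi_n$ of i.i.d.\ Bernoulli$(\lambda)$ variables; set $V_i := \xi_i \1_{U_i\le x} + (1-\xi_i)\1_{U_i\le y}$. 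Then each $V_i$ is Bernoulli with parameter $\lambda x+(1-\lambda)y = z$, so $\sum_i V_i \eqdist S_n(z)$; meanwhile $\sum_i \xi_i\1_{U_i\le x} \eqdist S_n(x)$ and $\sum_i (1-\xi_i)\1_{U_i\le y} \eqdist S_n(y)$ — though these last two are of course not the full sums, so this particular decomposition needs adjustment.

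The cleaner coupling is to condition on $K:=\#\{i:\xi_i=1\}$: given $K=m$, the sum $\sum_i V_i$ is a sum of $m$ independent Bernoulli$(x)$'s and $n-m$ independent Bernoulli$(y)$'s, hence distributed as $S_m(x)+S'_{n-m}(y)$ with the two pieces independent. Then, writing $\beta$ for the conditional quantity, convexity of $f$ gives $\E[f(S_n(z)/n)\mid K=m] = \E f\big((S_m(x)+S'_{n-m}(y))/n\big)$, and one wants to compare this with $\frac{m}{n}B_n f(x)+\frac{n-m}{n}B_n f(y)$ in expectation over $K$ (whose mean is $\lambda n$). The comparison that makes everything work is the pointwise convexity inequality applied inside the expectation together with the fact that $S_m(x)/m$ and $S'_{n-m}(y)/(n-m)$ have means $x$ and $y$: more precisely, $f\big((S_m(x)+S'_{n-m}(y))/n\big) \le \frac{m}{n} f(S_m(x)/m) + \frac{n-m}{n} f(S'_{n-m}(y)/(n-m))$, since the argument on the left is the corresponding convex combination of the two arguments on the right. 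Taking expectations and then averaging over $K\sim\mathrm{Bin}(n,\lambda)$ yields exactly $B_n f(z)\le \lambda B_n f(x)+(1-\lambda) B_n f(y)$ after checking $\E[K/n]=\lambda$ and that the conditional laws have the stated product form.

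The main obstacle is bookkeeping in the coupling: one must be careful that, conditionally on $K=m$, the variable whose law equals that of $S_n(z)$ really decomposes as an independent sum of a $\mathrm{Bin}(m,x)$ and a $\mathrm{Bin}(n-m,y)$, and that the edge cases $m=0$ and $m=n$ (where one of the two polynomial terms drops out) are handled by interpreting $S_0/0$ as an arbitrary constant that gets multiplied by $0$. Everything else is a one‑line application of the pointwise convexity of $f$ inside an expectation. If one prefers to avoid the coupling altogether, the finite‑difference identity for $(B_nf)''$ displayed above is a fully rigorous and short alternative, and I would include it as a remark.
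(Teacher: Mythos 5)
Your coupling argument has a genuine gap at the final averaging step. Conditionally on $K=m$, taking expectations of your pointwise convexity inequality gives
\[
\E\bigl[f(S_n(z)/n)\mid K=m\bigr]\;\le\;
\tfrac{m}{n}\,\E f\bigl(S_m(x)/m\bigr)+\tfrac{n-m}{n}\,\E f\bigl(S'_{n-m}(y)/(n-m)\bigr)
=\tfrac{m}{n}\,B_m f(x)+\tfrac{n-m}{n}\,B_{n-m} f(y),
\]
i.e.\ Bernstein operators of the random degrees $m$ and $n-m$, not of degree $n$. Averaging over $K\sim\mathrm{Bin}(n,\lambda)$ therefore yields only
$B_n f(z)\le \E\bigl[\tfrac{K}{n}B_K f(x)+\tfrac{n-K}{n}B_{n-K} f(y)\bigr]$,
and this does not imply the target: for convex $f$ one has $B_m f\ge B_n f$ when $m\le n$, so the right-hand side is in general strictly larger than $\lambda B_n f(x)+(1-\lambda)B_n f(y)$. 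Concretely, with $f(t)=t^2$ one computes
$\E\bigl[\tfrac{K}{n}B_K f(x)\bigr]=\lambda x^2+\tfrac{x(1-x)}{n}\bigl(1-(1-\lambda)^n\bigr)>\lambda B_n f(x)$
for $n\ge 2$, $0<\lambda<1$, $0<x<1$, so the chain of inequalities cannot close. The degree mismatch is intrinsic to this coupling, and noting that $\E[K/n]=\lambda$ does not repair it, because $B_K f(x)$ is not linear (nor monotone in the helpful direction) in $K$.

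By contrast, the finite-difference identity you relegate to a remark is a complete proof and is in fact essentially the paper's own argument: the paper establishes
\[
(B_n f)''(x)=n(n-1)\,\E\Bigl[f\bigl(\tfrac{S_{n-2}(x)+2}{n}\bigr)-2f\bigl(\tfrac{S_{n-2}(x)+1}{n}\bigr)+f\bigl(\tfrac{S_{n-2}(x)}{n}\bigr)\Bigr],
\]
which is exactly your displayed sum written as an expectation; it obtains this by first deriving the first-derivative formula \eqref{Bfder} probabilistically (tracking the event that exactly one $U_i$ falls in $(x,x+\epsilon]$), applying it twice, and then concluding from nonnegativity of second differences of a convex $f$. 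So you should promote that identity to the main proof---either via the standard direct computation you mention or via the paper's probabilistic derivation---and drop or substantially repair the coupling route.
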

\begin{proof}
Let $f:[0,1]\to \R$ and $\epsilon > 0$.
We shall prove that
\begin{equation}
\label{Bfder}
\frac{d}{dx} B_n f(x) 
= n \E\left[f\left(\frac{S_{n-1}(x)+1}{n}\right) - f\left(\frac{S_{n-1}(x)}{n}\right)\right]
\end{equation}
This can be done by direct computation using \eqref{binomdist}.
Alternatively, we can give a probabilistic argument.
Consider $B_n f(x+\epsilon) - B_n f(x) = \E [f(G_n(x+\epsilon)) - f(G_n(x))]$
and compute first-order terms in $\epsilon$.
By \eqref{Gdef},
$f(G_n(x+\epsilon)) - f(G_n(x))$ is nonzero if and only
if at least one of the $U_i$'s falls in the
interval $(x,x+\epsilon]$. The probability that $2$ or more 
variables fall in this interval is $o(\epsilon)$, as $\epsilon \to 0$.
Hence, if $F_\epsilon$ is the event that {\em exactly} one of the
variables falls in this interval, then
\begin{equation}
\label{init}
B_n f(x+\epsilon) - B_n f(x)
= \sum_{k=0}^{n-1} [f((k+1)/n)-f(k/n)]\,\P(S_n(x)=k,F_\epsilon)+o(\epsilon).
\end{equation}
If we let $F_{j,\epsilon}$ be the event that only $U_j$ is
in $(x,x+\epsilon)$, then 
$\P(S_n(x)=k, F_{j,\epsilon})$ is independent of $j$, so
$\P(S_n(x)=k, F_\epsilon) = n \P(S_n(x)=k, F_{n,\epsilon})
=n \epsilon\, \P(S_{n-1}(x)= S_{n-1}(x+\epsilon)=k)= 
n \epsilon\, \P(S_{n-1}(x)=k) \, (1-\epsilon/(1-x))^{n-1-k}
= (n \epsilon +o(\epsilon) )\, \P(S_{n-1}(x)=k)$.
So \eqref{init} becomes
\[
B_n f(x+\epsilon) - B_n f(x)
= n\epsilon \E\left[f\left(\frac{S_{n-1}(x)+1}{n}\right) - f\left(\frac{S_{n-1}(x)}{n}\right)\right]
+ o(\epsilon),
\]
and, upon 
dividing by $\epsilon$ and letting $\epsilon\to 0$, we obtain \eqref{Bfder}. 
Applying the same formula \eqref{Bfder} once more (no further work is needed)
we obtain
\[
\frac{d^2}{dx^2} B_n f(x) 
= n(n-1) \E\left[f\left(\frac{S_{n-2}(x)+2}{n}\right) 
- 2 f\left(\frac{S_{n-2}(x)+1}{n}\right) 
+f\left(\frac{S_{n-2}(x)}{n}\right)\right].
\]
Bring in now the assumption that 
$f$ is convex, whence $f(y+(2/n)) - 2 f(y+(1/n))+f(y) \ge 0$,
 for all $0 \le y \le 1-(2/n)$, and deduce that $(B_nf )''(x) \ge 0$
for all $0\le x \le 1$.
So $B_n f$ is a convex function.
\end{proof}

\begin{center}
\begin{tabular}{cc}
\epsfig{file=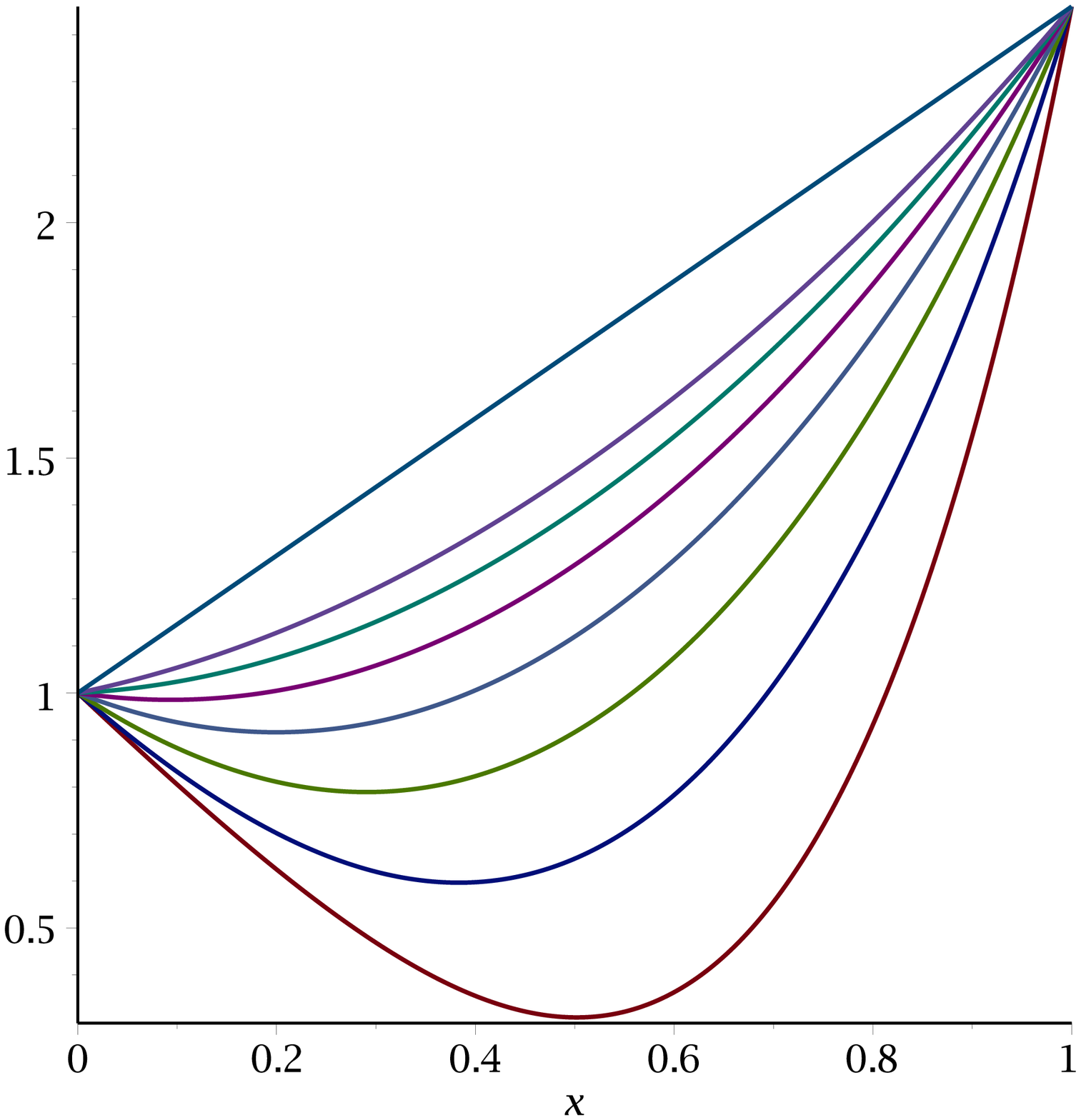,width=5cm} 
&
\epsfig{file=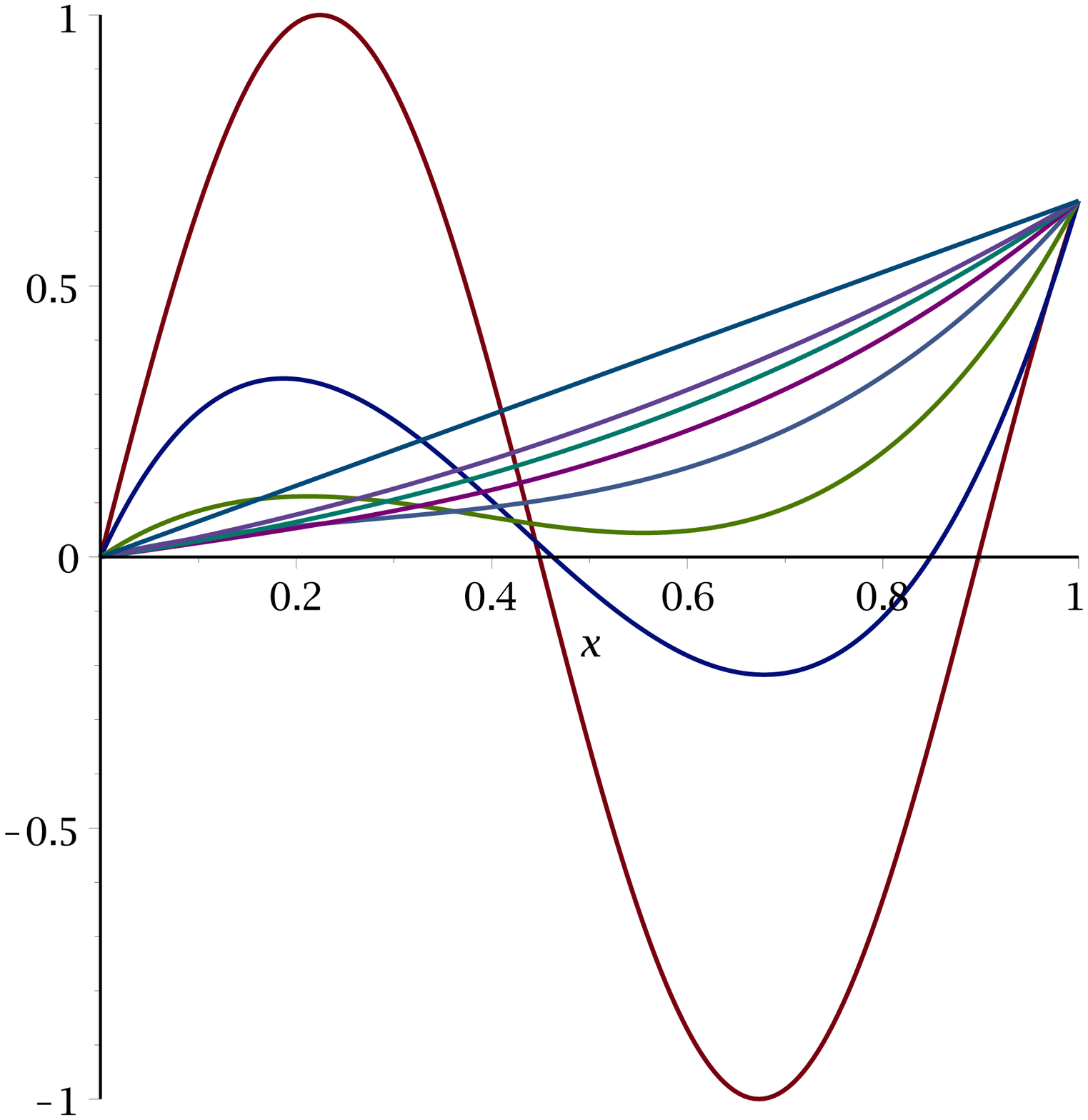,width=5cm}
\end{tabular}
\\
{\small \em Convergence of the iterates of $B^k_n f$ as $k \to \infty$
\\
for a convex $f$ (left) and a nonconvex one (right).}
\end{center}

We can now exhibit a rate of convergence.
\begin{proposition}
\label{krRATE}
For all $0\le x \le 1$, $k, n \in \N$,
\begin{equation}
\label{betabound}
|B_n^k f(x) - B_1 f(x)| \le 2 \|f\|\, \beta(k,x),
\end{equation}
where
\begin{equation}
\label{naprob}
\beta(k,x):=\P(H_n^k(x) \not \in \{0,1\}).
\end{equation}
Moreover,
\begin{equation}
\label{betahalf}
\beta(k,x) \le \beta(k,1/2)
\end{equation}
and
\begin{equation}
\label{betaup}
\beta(k,x) \le n \left(1-\frac{1}{n}\right)^{k-1}\, x\,(1-x).
\end{equation}
\end{proposition}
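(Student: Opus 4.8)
The plan is to handle the three assertions separately, all resting on the probabilistic representation $B_n^k f(x)=\E f(H_n^k(x))$ from \eqref{Bkf} together with the absorption structure of the chain $H_n^k(x)$. For \eqref{betabound} I would compare $\E f(H_n^k(x))$ with $\E f(W(x))$, using that $B_1 f(x)=f(0)(1-x)+f(1)x=\E f(W(x))$, exactly as in the proof of Theorem~\ref{krthm}. Since $0$ and $1$ are absorbing, $\{H_n^k(x)\in\{0,1\}\}=\{T(x)\le k\}$, and on this event $H_n^k(x)=H_n^{T(x)}(x)=W(x)$; hence $f(H_n^k(x))-f(W(x))$ vanishes there and is at most $2\|f\|$ in modulus on the complement. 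Taking expectations,
\[
|B_n^k f(x)-B_1 f(x)|
=\bigl|\E\bigl[(f(H_n^k(x))-f(W(x)))\,\1_{\{H_n^k(x)\notin\{0,1\}\}}\bigr]\bigr|
\le 2\|f\|\,\P(H_n^k(x)\notin\{0,1\})=2\|f\|\,\beta(k,x).
\]

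For \eqref{betahalf} I would first establish the recursion $\beta(k,\cdot)=B_n\beta(k-1,\cdot)$ on $[0,1]$: writing $H_n^k(x)=(G_n^k\comp\cdots\comp G_n^2)(G_n^1(x))$ and using that $G_n^k\comp\cdots\comp G_n^2$ is an independent copy of $H_n^{k-1}$, conditioning on $G_n^1(x)=y$ gives $\P(H_n^k(x)\notin\{0,1\}\mid G_n^1(x)=y)=\beta(k-1,y)$, so $\beta(k,x)=\E\beta(k-1,G_n(x))=B_n\beta(k-1,\cdot)(x)$. Now $\beta(1,x)=1-x^n-(1-x)^n$ is concave on $[0,1]$ and symmetric about $x=1/2$, and $B_n$ preserves both properties: concavity because $B_n(-g)=-B_n g$ while $-g$ is convex when $g$ is concave, so Lemma~\ref{Bconv} applies; and reflection symmetry because $B_n g(1-x)=B_n[g(1-\cdot)](x)$ after reindexing $j\mapsto n-j$ in \eqref{Bn}. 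Hence every $\beta(k,\cdot)$ is concave and symmetric about $1/2$, and for any such $h$ one has $h(1/2)=h\bigl(\tfrac{x+(1-x)}{2}\bigr)\ge\tfrac12 h(x)+\tfrac12 h(1-x)=h(x)$, which is \eqref{betahalf}. (Symmetry is genuinely needed here: a concave function vanishing at $0$ and $1$ need not peak at $1/2$.)

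For \eqref{betaup} I would use the Lyapunov function $V(y):=y(1-y)$. A second-moment computation gives $\E V(G_n(x))=\E G_n(x)-\E G_n(x)^2=x-\bigl(x^2+\tfrac1n x(1-x)\bigr)=(1-\tfrac1n)V(x)$; conditioning on $H_n^{k-1}(x)$ and using independence of $G_n^k$ gives $\E[V(H_n^k(x))\mid H_n^{k-1}(x)]=(1-\tfrac1n)V(H_n^{k-1}(x))$, so by induction $\E V(H_n^k(x))=(1-\tfrac1n)^k x(1-x)$. On $\{H_n^k(x)\notin\{0,1\}\}$ the value $H_n^k(x)$ lies in $\{1/n,\dots,(n-1)/n\}$, whence $V(H_n^k(x))\ge\tfrac1n(1-\tfrac1n)=\tfrac{n-1}{n^2}$, and Markov's inequality yields
\[
\beta(k,x)\le\P\Bigl(V(H_n^k(x))\ge\tfrac{n-1}{n^2}\Bigr)
\le\frac{\E V(H_n^k(x))}{(n-1)/n^2}
=\frac{n^2}{n-1}\Bigl(1-\tfrac1n\Bigr)^{k}x(1-x)
=n\Bigl(1-\tfrac1n\Bigr)^{k-1}x(1-x).
\]

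None of the three parts involves a serious analytic difficulty; the step requiring the most care is the bookkeeping in \eqref{betahalf}, namely orienting the composition so that the recursion reads $\beta(k,\cdot)=B_n\beta(k-1,\cdot)$ and checking that $B_n$ preserves both concavity and the reflection symmetry about $1/2$.
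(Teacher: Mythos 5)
Your proof is correct, and while the overall skeleton matches the paper's (everything rests on \eqref{Bkf}, concavity preservation, and the exact identity $\E\left[G_n(x)(1-G_n(x))\right]=(1-\tfrac1n)x(1-x)$), two of your three steps take a genuinely different route. For \eqref{betabound} the paper compares $B_n^k f$ with $B_n^{k+\ell}f$ by conditioning on $H_n^k(x)$, bounds the difference by $2\|f\|\beta(k,x)$, and then lets $\ell\to\infty$ using Theorem \ref{krthm}; you compare directly with the absorbed value $W(x)$, using that $H_n^k(x)=W(x)$ on $\{T(x)\le k\}$ and $\E f(W(x))=B_1 f(x)$ from the identification $\P(W(x)=1)=x$ in the proof of Theorem \ref{krthm} --- slightly more direct, and it avoids the limiting step, at the cost of invoking the law of $W(x)$ rather than just the limit statement. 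For \eqref{betahalf} you follow the paper's argument (concavity of $\beta(k,\cdot)$ via Lemma \ref{Bconv} applied to $-g$, plus symmetry about $1/2$); your explicit verification that $B_n$ preserves the reflection symmetry, via $B_n g(1-x)=B_n[g(1-\cdot)](x)$, fills in a step the paper only asserts ``by symmetry,'' and your remark that symmetry is genuinely needed is apt. For \eqref{betaup} the mechanisms differ: the paper applies the pointwise inequality $1-t^n-(1-t)^n\le nt(1-t)$ to the last transition and then iterates the exact recursion for $\gamma(k,x)=\E[H_n^k(x)(1-H_n^k(x))]$, whereas you compute $\E V(H_n^k(x))=(1-\tfrac1n)^k x(1-x)$ exactly for $V(y)=y(1-y)$ and convert this into a probability bound by Markov's inequality, using the lattice gap $V\ge (n-1)/n^2$ off $\{0,1\}$; remarkably both give the identical constant $n(1-\tfrac1n)^{k-1}$. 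Your route exploits the discreteness of the state space (and formally requires $n\ge 2$ to divide by $(n-1)/n^2$, the case $n=1$ being trivial since then $\beta(k,x)=0$ for $k\ge1$), while the paper's is purely analytic and would survive without the lattice structure; on the other hand, your Lyapunov/Markov viewpoint makes transparent exactly where the wasteful factor $n$ in the bound comes from, namely the small minimal value of $V$ near the absorbing states.
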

\begin{proof}
We have, for all positive integers $\ell$ and $k$,
\begin{align*}
 B_n^k f(x) - B_n^{k+\ell} f(x) 
&= \E\left[ f(H_n^{k+\ell}(x)) - f(H_n^k(x))\right]
\\
&= \E\left[ f(G_n^{k+\ell} \comp \cdots\comp  G_n^{k+1}(H_n^k(x))) - f(H_n^k(x))\right]
\\
&= \sum_{y\in \States_n\setminus\{0,1\}} \P(H_n^k(x)=y)\,
\E\left[ f(G_n^{k+\ell}\comp  \cdots\comp  G_n^{k+1}(y)) - f(y)\right],
\end{align*}
whence
\begin{align*}
\left| B_n^k f(x) - B_n^{k+\ell} f(x) \right| &\le  2\|f\|\, \beta(k,x).
\end{align*}
Letting $\ell \to \infty$ and using Theorem \ref{krthm} 
yields \eqref{betabound}.
To prove \eqref{betahalf}, we notice that
$\beta(k,x) = \E \phi(H_n^k(x))$, where $\phi(x)=0$
if $x\in\{0,1\}$ and $1$ otherwise,
and, by \eqref{Bkf}, $\beta(k,x) = B^k_n \phi(x)$.
Note that $\beta(1,x) = B_n \phi(x)=1-x^n-(1-x)^n$ is a concave function.
By \eqref{Bkf}, $\beta(k,x) = B^k_n \phi(x) = B_n^{k-1} B_n \phi(x)$
which is also concave by Lemma \ref{Bconv}.
Hence
\[
\half \beta(k,x) + \half \beta(k,1-x) \le \beta(k,1/2).
\]
Since, by symmetry, $\beta(k,x)=\beta(k,1-x)$, inequality \eqref{betahalf} 
follows.
For the final inequality \eqref{betaup}, notice that
\[
\beta(k,x) = 1 - \E\left[(H_n^{k-1}(x))^n + (1-H_n^{k-1}(x))^n\right]
\le n\, \E \left[H_n^{k-1}(x) \left(1-H_n^{k-1}(x)\right)\right],
\]
where we used the inequality $1-t^n -(1-t)^n \le n t(1-t)$, for all $0 \le t
\le 1$.
Therefore,
\[
\beta(k,x) \le n\, \E \left[H_n^{k-1}(x) \left(1-H_n^{k-1}(x)\right)\right]
=: n\, \gamma(k-1,x).
\]
Using
\[
\E \left[G_n(x) \left(1-G_n(x)\right)\right]
=\left(1-\frac{1}{n}\right)\, x\, (1-x),
\]
we obtain the recursion
\[
\gamma(k-1,x) = \left(1-\frac{1}{n}\right) \gamma(k-2,x).
\]
Taking into account that $\gamma(0,x) = x(1-x)$ we find that
$\gamma(k-1,x) = \left(1-\frac{1}{n}\right)^{k-1}\, x\,(1-x)$.
\end{proof}
%
\begin{remark}
Combining \eqref{betabound} and \eqref{betaup}
we get 
\[
|B_n^k f(x) - B_1 f(x)| \le 2 \|f\|\, n \left(1-\frac{1}{n}\right)^{k-1} x(1-x).
\]
This should be compared with \cite[Eq.\ (4)]{AI09} that says that
$|B_n^k f(x) - B_1 f(x)| \le M(f,n)\, \left(1-\frac{1}{n}\right)^{k-1} x(1-x)$,
for some constant $M(f,n)$ 
which has not been computed in \cite{AI09}, 
whereas we have an explicit constant $2\|f\| n$.
Now, the factor $n$ is probably wasteful and this comes from the fact
that the inequality 
$1-t^n -(1-t)^n \le n t(1-t)$ is not good when $n$ is large. We
only used it because of the simplicity of the right-hand side
that enabled us to compute $\gamma(k,x)$ very easily.
We have a better inequality, namely \eqref{betabound}, but to make 
it explicit one needs to compute $\P(H_n^k(x)=0)$.
\end{remark}

\section{Interlude: population genetics and the Wright-Fisher model}
\label{interlude}
We now take a closer look at the Markov chain described by
the sequence $(H_n^k(x), k \in \N)$ for fixed $n$.
We repeat formula \eqref{tranprob}:
\[
\P\left(n H_n^{k+1}(x) = j \mid n H_n^k(x) = i\right)
=\binom{n}{j}\left(\frac{i}{n}\right)^j \left(1-\frac{i}{n}\right)^{n-j}.
\tag{7$'$}
\label{tranprobbis}
\]
We recognize that it describes 
the simplest stochastic model for reproduction in population
genetics that goes as follows.
There is a population of $N$ individuals each of which 
carries 2 genes. Genes come in 2 variants, I and II, say,
Thus, an individual may have 2 genes of type I both, or of type II both,
or one of each. 
Hence there are $n=2N$ genes in total.
We observe the population at successive generations
and assume that generations are non-overlapping.
Suppose that the $k$-th generation consists of $i$ genes of type I
and $n-i$ of type II.
In Figure \ref{figgener} below, type I genes are yellow,
and type II are red.
\begin{figure}[h]
\begin{center}
\epsfig{file=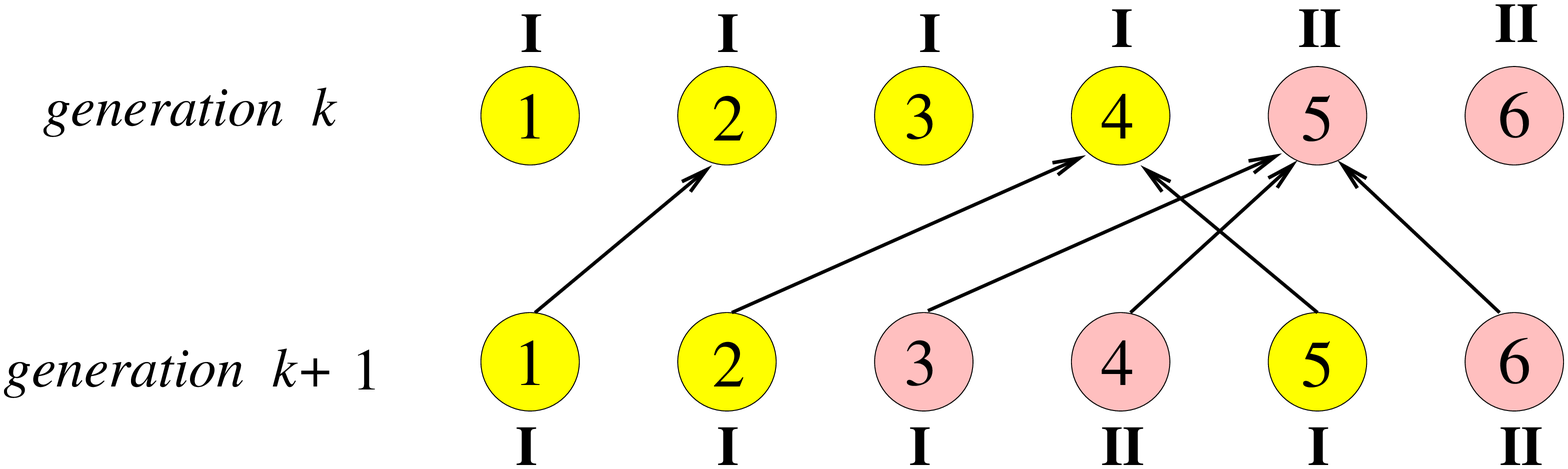,width=9.5cm}
\begin{minipage}{12cm}
\caption{\small\em An equivalent way to form generation $k+1$ from
generation $k$ is by letting $\phi$ be a random element of the set
$[n]^{[n]}$, the set of all mappings from $[n]:=\{1,\ldots,n\}$ into
itself and by drawing an arrow with starting vertex $i$ end ending vertex
$\phi(i)$. If $\phi(i)$ is of type {\rm I} (or {\rm II}) then $i$
becomes {\rm I} (or {\rm II}) too.
}
\label{figgener}
\end{minipage}
\end{center}
\end{figure}
To specify generation $k+1$, we let each gene of generation $k+1$
select a single ``parent'' at random from the genes of the previous generation.
The gene adopts the type of its parent.
The parent selection is independent across genes.
The probability that a specific gene selects a parent of type I
is $i/n$. Since we have $n$ independent trials,
the probability that generation $k+1$ will contain
$j$ genes of type I is given by the right-hand side of 
formula \eqref{tranprobbis}.
If we start the process at generation $0$
with genes of type I being chosen, independently, with
probability $x$ each, then the number of alleles at the $k$-th generation
has the distribution of $n H_n^k(x)$.

This stochastic model we just described is known as the Wright-Fisher model,
and is fundamental in mathematical biology for populations of fixed size.
The model is very far from reality, but has nevertheless been extensively
studied and used.

Early on, Wright and Fisher observed that performing exact computations
with this model is hard. They devised a continuous
approximation observing that the probability
$\P(H_n^k(x) \le y)$ as a function of $x$, $y$ and $k$ can,
when $n$ is large, be approximated by a smooth function
of $x$ and $y$.
(See, e.g., Kimura \cite{KIM64} and the recent paper
by Tran, Hofrichter, and Jost \cite{THJ13}.)
Rather than approximating this probability, we follow
modern methods of stochastic analysis in order to
approximate the discrete stochastic process $(H_n^k(x), k \in \N)$
by a continuous-time continuous-space stochastic process
that is nowadays known as Wright-Fisher diffusion.

\section{The Wright-Fisher diffusion}
\label{secwfdiff}
Our eventual goal is to understand what happens when we consider the
limit of $B^k_n f$, when both $k$ and $n$ tend to infinity.
From the Bernstein and the Kelisky-Rivlin theorems we should
expect that the order at which limits over $k$ and $n$ are taken
matters.
It turns out that the only way to obtain a limit is when the 
ratio $k/n$ tends to a constant, say, $t$.
This is intimately connected to the Wright-Fisher diffusion
that we introduce next.
We assume that the reader has some knowledge of stochastic calculus,
including the It\^o formula and stochastic differential equations
driven by a Brownian motion at the basic level of {\O}ksendal
\cite{OKS03} or at the more advanced level of Bass \cite{BASS}.

We first explain why we expect that the Markov chain $H^k_n(x)$, $k\in
\Z_+$ has a limit, in a certain sense, as $n \to \infty$.
Our explanation here will be informal. We shall give rigorous proofs
of only what we need in the following sections.

The first thing we do is to compute the expected variance of the
increment of the chain, 
and examine whether it converges to zero and at which rate:
see \eqref{STEPVAR}, Theorem \ref{MC2DIFF}, 
Section A3 in the Appendix.
The rate of convergence gives us the right time scale.
Our case at hand is particularly simple because we have
an exact formula:
\begin{equation}
\label{STEPvar}
\E\left[(H^{k+1}_n(x)-H^k_n(x))^2 \mid H^k_n(x)=y\right]
= \E\left[\left(G_n(y)-y\right)^2\right] = \frac{1}{n} y (1-y).
\end{equation}
This suggests that the right time scale at which we should run the Markov chain
is such that the time steps are of size $1/n$.
In other words, consider the points 
\begin{equation}
\label{POINTS}
\text{$(0,x)$, $(1/n, H^1_n(x))$, $(2/n, H^2_n(x))$, $\ldots$}
\end{equation}
and 
draw the random curve
\[
t \mapsto H_n^{[nt]}(x)
\]
(where $[nt]$ is the integer part of $nt$) as in Figure \ref{figcurve}. 
This is at the right time scale.
\begin{figure}[h]
\begin{center}
\epsfig{file=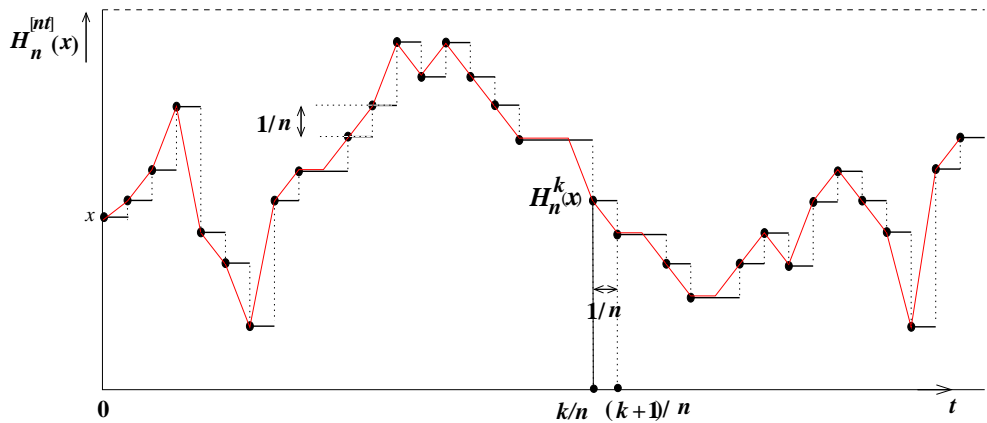,width=13cm}
\begin{minipage}{15cm}
\caption{\small \em A continuous-time curve from the discrete-time Markov
chain}
\label{figcurve}
\end{minipage}
\end{center}
\end{figure}

The second thing we do
(see \eqref{STEPMEAN}, Theorem \ref{MC2DIFF}, Section A3)
is to compute the expected change of the Markov
chain. In our case, this is elementary:
\begin{equation}
\label{STEPmean}
\E\left[H^{k+1}_n(x)-H^k_n(x) \mid H^k_n(x)=y\right]
= \E\left[G_n(y)-y\right] = 0.
\end{equation}
The functions $\sigma(y)^2 = y(1-y)$ and $b(y) = 0$ obtained in
\eqref{STEPvar} and \eqref{STEPmean} suggest that the limit of 
the random curve $(H_n^{[nt]}(x), \, t \ge 0)$
should be a diffusion process $X_t(x)$, $t \ge 0$, satisfying the stochastic
differential equation
\begin{align}
d X_t(x) &= \sigma(X_t(x)) dW_t + b(X_t(x)) dt
\nonumber
\\
&= \sqrt{X_t(x)\, (1-X_t(x))}\, dW_t,
\label{SDEX}
\end{align}
with initial condition $X_0(x)=x$, where $W_t$, $t\ge 0$, 
is a standard Brownian motion.

It is actually possible to prove that $(H_n^{[nt]}, t \ge 0)$ 
converges {\em  weakly} to $(X_t(x), t \ge 0)$,
but this requires an additional estimate on the
size of the increments of the Markov chain that
is, in our case, provided by the following inequality:
for any $\epsilon > 0$,
\begin{equation}
\label{STEPexceed}
\P(|H_n^{k+1}(x) - H_n^k(x)| > \epsilon \mid H_n^k(x)=y)
= \P(|G_n(y)-y| > \epsilon) \le 2 e^{-\half \epsilon^2 n}.
\end{equation}
To see this, apply Hoeffding's inequality (see \eqref{hoeffding} 
Section A2 in the Appendix).

We thus have that \eqref{STEPvar}, \eqref{STEPmean}, \eqref{STEPexceed}
are the conditions \eqref{STEPVAR}, \eqref{STEPMEAN} and \eqref{STEPEXCEED}
of Theorem \ref{MC2DIFF}, Section A3.
In addition, it can be shown that the stochastic differential equation
\eqref{SDEX} admits a unique strong solution for any initial condition $x$.
This is, e.g., a consequence of the Yamada-Watanabe theorem \cite[Theorem 24.4]{BASS}.
Hence, by Theorem \ref{MC2DIFF}, the sequence of continuous random curves
$(H_n^{[nt]}(x), t \ge 0)$ converges weakly to the continuous random function
$(X_t, t \ge 0)$.


One particular conclusion of weak convergence is 
that $\E f(H_n^{[nt]}(x)) \to \E f(X_t(x))$
for any $f \in C[0,1]$ or, equivalently, that
\begin{theorem}[joint limits theorem]
\label{JOINTLIMSthm}
For any $f \in C[0,1]$ and any $t \ge 0$,
\begin{equation}
\label{fidiconv}
\lim_{n \to \infty} B_n^{[nt]} f(x) = \E f(X_t(x)),\quad \text{uniformly in } x.
\end{equation}
\end{theorem}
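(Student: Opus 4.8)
Because $[0,1]$ is compact, any $f\in C[0,1]$ is bounded and uniformly
continuous, so the substance of the statement beyond what was already
observed is the uniformity in $x$. My plan has three steps. First, record
the pointwise convergence: by \eqref{Bkf} we have
$B_n^{[nt]}f(x)=\E f(H_n^{[nt]}(x))$, and the weak convergence of the
rescaled chains $(H_n^{[nt]}(x),\,t\ge0)$ to the Wright--Fisher diffusion
$(X_t(x),\,t\ge0)$ forces the time-$t$ marginals to converge in
distribution, $H_n^{[nt]}(x)\Rightarrow X_t(x)$; since $f$ is bounded and
continuous, $B_n^{[nt]}f(x)\to\E f(X_t(x))$ for every fixed $x$ and $t$.
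Second, I would show that the family of polynomials
$\{B_n^{[nt]}f:\ n\in\N\}$ is equicontinuous on $[0,1]$. Third, I would
conclude: a pointwise convergent, equicontinuous sequence on a compact
interval converges uniformly, and the limit is automatically continuous
(Arzel\`a--Ascoli, or a direct $\epsilon/3$ argument).

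The one idea needed for the second step is a monotone coupling. I would
build the independent copies $G_n^1,G_n^2,\ldots$ of $G_n$ from disjoint
blocks of i.i.d.\ uniforms as in \eqref{Gdef}; then each $G_n^i$ is, for
every realization, a nondecreasing function on $[0,1]$, and hence so is
every composition $H_n^k=G_n^k\comp\cdots\comp G_n^1$ from \eqref{Amarkov}.
Consequently, for $0\le x\le x'\le 1$ the variable $H_n^k(x')-H_n^k(x)$ is
almost surely nonnegative, and since $\E H_n^k(y)=y$ for every $y$,
\[
\E\,\bigl|H_n^k(x')-H_n^k(x)\bigr|=\E\bigl(H_n^k(x')-H_n^k(x)\bigr)=x'-x .
\]
Hence, for any Lipschitz $g$ on $[0,1]$, \eqref{Bkf} gives
$|B_n^k g(x')-B_n^k g(x)|\le\Lip(g)\,\E|H_n^k(x')-H_n^k(x)|=\Lip(g)\,(x'-x)$
for \emph{all} $n$ and $k$; that is, $B_n^k g$ is $\Lip(g)$-Lipschitz,
uniformly in $n$ and $k$.

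To reach a general $f\in C[0,1]$, I would use that $B_n$ is an averaging
operator (see \eqref{BnE}) and hence a sup-norm contraction, so
$\|B_n^k h\|\le\|h\|$ for every $h$ and $k$. Given $\epsilon>0$, pick a
Lipschitz $g$ with $\|f-g\|<\epsilon/4$ (say a piecewise linear
interpolant on a fine grid). Then, for all $n,k$ and all $x,x'\in[0,1]$,
\[
\bigl|B_n^k f(x')-B_n^k f(x)\bigr|
\le 2\|f-g\|+\bigl|B_n^k g(x')-B_n^k g(x)\bigr|
<\tfrac{\epsilon}{2}+\Lip(g)\,|x'-x| ,
\]
so $|x'-x|<\epsilon/(2\Lip(g))$ makes the left-hand side smaller than
$\epsilon$. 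This yields equicontinuity of $\{B_n^{[nt]}f:\ n\in\N\}$, and,
combined with the pointwise limit of the first step and compactness of
$[0,1]$, the uniform convergence asserted in \eqref{fidiconv} follows.

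I do not anticipate a real obstacle. The genuinely hard analytic content,
namely the weak convergence of $(H_n^{[nt]}(x))$ to the Wright--Fisher
diffusion, is already supplied by Theorem \ref{MC2DIFF} together with the
estimates \eqref{STEPvar}, \eqref{STEPmean} and \eqref{STEPexceed}; what
remains is essentially bookkeeping, the only non-routine point being the
monotone coupling, which makes the equicontinuity estimate both exact and
independent of $n$. One could instead try to estimate
$\E|H_n^k(x')-H_n^k(x)|$ directly, without coupling, but that route is
messier and, given the monotone structure, unnecessary.
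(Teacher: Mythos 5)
Your proposal is mathematically sound, but it takes a genuinely different route from the paper. You outsource the analytic core to the weak-convergence statement (Theorem \ref{MC2DIFF} plus \eqref{STEPvar}, \eqref{STEPmean}, \eqref{STEPexceed}), get pointwise convergence of $B_n^{[nt]}f(x)=\E f(H_n^{[nt]}(x))$ from convergence of the time-$t$ marginals, and then upgrade to uniformity via equicontinuity; your monotone coupling is correct and elegant (each $G_n^i$ is a.s.\ nondecreasing, $H_n^k(x')\ge H_n^k(x)$ for $x'\ge x$, and $\E H_n^k(y)=y$ gives $\E|H_n^k(x')-H_n^k(x)|=x'-x$, so $B_n^k$ preserves Lipschitz constants uniformly in $n,k$; combined with the sup-norm contraction property and Arzel\`a--Ascoli this does yield uniform convergence). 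Two caveats. First, Theorem \ref{MC2DIFF} is quoted without proof from Stroock--Varadhan, and the explicit purpose of Section \ref{JOINTsec} is to prove Theorem \ref{JOINTLIMSthm} \emph{without} that machinery: the paper instead reduces to polynomial $f$ (using Bernstein's theorem and the contraction property), Poissonizes time so that $H_n^{\Phi(nt)}(x)$ is Markov in continuous time, and runs an interpolation argument on $Y_t^{(s)}$ combining the backward equation for $\sP_s h$ (computable for polynomials by Proposition \ref{POLY}) with the quantitative Voronovskaya bound \eqref{sconv}; this buys a self-contained proof and an explicit $O(n^{-1/2})$ rate for polynomial $f$, which your argument does not provide. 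Second, a small technical point if you keep your route: Theorem \ref{MC2DIFF} gives weak convergence of the \emph{linearly interpolated} curves \eqref{Xinterp}, so to conclude $H_n^{[nt]}(x)\Rightarrow X_t(x)$ you should note that the interpolated value differs from $H_n^{[nt]}(x)$ by at most one chain increment, which tends to $0$ in probability by \eqref{STEPexceed}. With those provisos, your argument is a valid alternative; in fact your coupling observation also explains why uniformity in $x$ comes for free once pointwise convergence is known, a step the paper handles instead by making all its estimates uniform in $x$ from the start.
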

Since understanding the theorem of Stroock and Varadhan requires
advanced machinery, we shall prove
Theorem \ref{JOINTLIMSthm} directly.
The proof is deferred until Section \ref{JOINTsec}.
The nice thing with this theorem is that we have a way to compute the
limit by means of stochastic calculus, the tools of which
we shall assume as known.

Let $f$ be a twice-continuously differentiable function.
Then, It\^o's formula (\cite[Ch.\ 11]{BASS}, \cite[Ch.\ 4]{OKS03}) says that
\begin{equation}
\label{ITO}
f(X_t) = f(X_0) + \int_0^t f'(X_s) dX_s
+ \frac{1}{2} \int_0^t f''(X_s) X_s(1-X_s) ds.
\end{equation}
If we let
\begin{equation}
\label{generator}
\LL f := \frac{1}{2} x (1-x) \frac{d^2 f}{dx^2},
\end{equation}
take expectations in \eqref{ITO}, and differentiate with respect to $t$,
we obtain 
\begin{equation}
\label{BACKeq}
\pd{}{t} \E f(X_t(x)) = \E (\LL f)(X_t(x)),
\end{equation}
the so-called forward equation of the diffusion.
Now let, for all $s \ge 0$, 
\[
\sP_s g (x) := \E g(X_s(x)),
\]
noticing that $\sP_s g$ is defined for all bounded and
measurable $g$ and that $\P_0 g(x) = g(x)$.
If $g$ is such that $\sP_s g \in C^2$ then we can set $f=\sP_s g$
in \eqref{BACKeq}. Now, $\E(\sP_s g)(X_t(x)) = \E g(X_{t+s}(x))$,
because of the Markov property of $X_t$, $t \ge 0$, and  so
\eqref{BACKeq} becomes 
\[
\pd{}{t} \E g(X_{t+s}(x)) = \E (\LL \sP_s g)(X_t(x)).
\]
Letting $t \to 0$, we 
arrive at the backward equation
\begin{equation}
\label{FRONTeq}
\pd{}{s} \E g(X_s(x)) = \LL \sP_s g(x),
\end{equation}
which is valid if $\sP_s g$ is twice continuously differentiable.
The class of functions $g$ such that both $g$ and $\sP_s g$ are 
in $C^2$ is nontrivial in our case. It contains, at least
polynomials. This is what we show next.

\section{Moments of the Wright-Fisher diffusion}
It turns out that in order to prove Theorem \ref{JOINTLIMSthm} 
we need to compute $\E f(X_t(x))$ when $f$ is a polynomial.
\begin{proposition}
\label{POLY}
For a positive integer $r$, the following holds for the
Wright-Fisher diffusion:
\[
\E X_t(x)^r = \sum_{i=1}^r b_{i,r}(t)\, x^i,
\]
where
\begin{gather}
b_{i,r}(t) = \sum_{j=i}^r \frac{A_{i,r}}{B_{i,j,r}}\, e^{-\alpha_j t},
\label{BIR}
\\
\alpha_j = \half j (j-1), \quad
A_{i,r} = \prod_{k=i+1}^r \alpha_k, \quad
B_{i,j,r} = \prod_{k=i,k \neq j}^r (\alpha_k - \alpha_j),
\quad 1 \le i \le j \le r
\label{coeff}
\end{gather}
(where, as usual, a product over an empty set equals $1$).
\end{proposition}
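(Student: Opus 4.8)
The plan is to turn the moment computation into a closed triangular system of linear ODEs in $t$ (with $x$ a parameter) and solve it by induction on $r$. The key algebraic observation is that the generator $\LL$ of \eqref{generator} acts on polynomials with no constant term in a one-step-down, lower-triangular fashion:
\[
\LL x^r = \tfrac12 x(1-x)\, r(r-1)\, x^{r-2} = \alpha_r\,(x^{r-1}-x^r),\qquad \alpha_r=\tfrac12 r(r-1).
\]
Since the solution of \eqref{SDEX} started in $[0,1]$ stays in $[0,1]$ (the endpoints are absorbing), all the moments $m_r(t):=\E X_t(x)^r$ are finite, bounded by $1$; and $x^r\in C^2$, so the forward equation \eqref{BACKeq} applies with $f(x)=x^r$ and gives the recursion
\[
m_r'(t)=\E(\LL x^r)(X_t(x))=\alpha_r\big(m_{r-1}(t)-m_r(t)\big),\qquad m_r(0)=x^r,
\]
supplemented by $m_0\equiv 1$ and, since $\alpha_1=0$, $m_1(t)\equiv x$. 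An immediate induction shows that each $m_r(t)$ is a polynomial in $x$ of degree $r$ with no constant term, which already yields the shape $\E X_t(x)^r=\sum_{i=1}^r b_{i,r}(t)\,x^i$; it remains to identify the coefficients $b_{i,r}$.

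To solve the recursion I would use the integrating factor $e^{\alpha_r t}$, writing
\[
m_r(t)=e^{-\alpha_r t}x^r+\alpha_r\int_0^t e^{-\alpha_r(t-s)}\,m_{r-1}(s)\,ds,
\]
substituting the inductive formula $m_{r-1}(s)=\sum_{i=1}^{r-1}\sum_{j=i}^{r-1}\frac{A_{i,r-1}}{B_{i,j,r-1}}\,e^{-\alpha_j s}\,x^i$, and using
\[
\alpha_r\int_0^t e^{-\alpha_r(t-s)}e^{-\alpha_j s}\,ds=\frac{\alpha_r}{\alpha_r-\alpha_j}\big(e^{-\alpha_j t}-e^{-\alpha_r t}\big),
\]
which is legitimate because $\alpha_1<\alpha_2<\cdots$ are distinct. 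Then the coefficient of $e^{-\alpha_j t}x^i$ with $i\le j\le r-1$ comes out as $\frac{\alpha_r}{\alpha_r-\alpha_j}\cdot\frac{A_{i,r-1}}{B_{i,j,r-1}}$, which equals $\frac{A_{i,r}}{B_{i,j,r}}$ because $A_{i,r}=\alpha_r A_{i,r-1}$ and $B_{i,j,r}=(\alpha_r-\alpha_j)B_{i,j,r-1}$ for $j\le r-1$. What remains is the coefficient of $e^{-\alpha_r t}x^i$ for $i<r$, which collects all the ``$-e^{-\alpha_r t}$'' pieces and equals $-\sum_{j=i}^{r-1}\frac{A_{i,r}}{B_{i,j,r}}$; matching this with the claimed value $\frac{A_{i,r}}{B_{i,r,r}}$ reduces exactly to the partial-fraction identity
\[
\sum_{j=i}^{r}\frac{1}{\prod_{k=i,\,k\ne j}^{r}(\alpha_k-\alpha_j)}=0,
\]
valid whenever there are at least two nodes, i.e.\ for $i<r$ — it is the vanishing of the degree-zero term in Lagrange interpolation through the distinct points $\alpha_i,\dots,\alpha_r$. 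The base case $r=1$ is $m_1\equiv x$, matching $b_{1,1}(t)=A_{1,1}/B_{1,1,1}=1$.

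I expect the main obstacle to be purely bookkeeping: lining up the index sets in $A_{i,r}$, $B_{i,j,r}$ under the recursion and cleanly separating out the $e^{-\alpha_r t}$ contributions so that the identity one needs is precisely the vanishing partial-fraction sum above. I would also add a remark that this ODE system is the Kolmogorov equation of the pure-death (block-counting) process of Kingman's coalescent, which jumps from $\ell$ to $\ell-1$ at rate $\alpha_\ell=\binom{\ell}{2}$ and is dual to the Wright-Fisher diffusion, so that $\E X_t(x)^r=\E_r[x^{D_t}]$ for that death chain $D_t$ started at $r$; the coefficients $b_{i,r}(t)$ are then the classical hypoexponential transition probabilities $\P_r(D_t=i)$. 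This dual description can replace the induction, but the ODE argument above is self-contained given \eqref{BACKeq}.
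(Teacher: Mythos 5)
Your proof is correct, and it reaches the same triangular ODE system as the paper: the paper applies It\^o's formula directly to $f(x)=x^r$ and kills the martingale term, which is exactly the content of your appeal to the forward equation \eqref{BACKeq}, so the recursion $\partial_t m_r=\alpha_r(m_{r-1}-m_r)$, $m_r(0)=x^r$, is common to both arguments. Where you genuinely diverge is in solving it: the paper passes to Laplace transforms in $t$, iterates the algebraic recursion $s\,\widehat m_r-x^r=\alpha_r(\widehat m_{r-1}-\widehat m_r)$ to get $\widehat m_r(s,x)=\sum_{i=1}^r\frac{\alpha_r}{s+\alpha_r}\cdots\frac{\alpha_{i+1}}{s+\alpha_{i+1}}\frac{x^i}{s+\alpha_i}$, and then reads off \eqref{BIR} by partial fractions and inversion, whereas you run an induction on $r$ with the integrating factor $e^{\alpha_r t}$ and close it with the vanishing divided-difference identity $\sum_{j=i}^r 1/B_{i,j,r}=0$ for $i<r$. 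These are two faces of the same algebra — your identity is precisely the statement that the residues of $\prod_{k=i}^r(s+\alpha_k)^{-1}$ sum to zero, which is implicit in the paper's partial-fraction step — but your route avoids transforms entirely at the cost of the coefficient bookkeeping ($A_{i,r}=\alpha_rA_{i,r-1}$, $B_{i,j,r}=(\alpha_r-\alpha_j)B_{i,j,r-1}$), which you handle correctly since $\alpha_1<\alpha_2<\cdots$ are distinct; the Laplace route gets all $j$ at once without the separate treatment of the $e^{-\alpha_r t}$ terms. Your closing remark on the moment duality with the Kingman block-counting death chain (so that $b_{i,r}(t)=\P_r(D_t=i)$ and the $b_{i,r}$ are hypoexponential probabilities) is a nice conceptual addition not present in the paper and is consistent with it.
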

\begin{proof}
Write $X_t=X_t(x)$ to save space.
By It\^o's formula \eqref{ITO} applied to $f(x)=x^r$,
\[
X_t^r = x^r + r \int_0^t X_s^{r-1} dX_s
+ \half r(r-1) \int_0^t X_s^{r-2} X_s(1-X_s) ds.
\]
Since the first integral is (as a function of $t$) a martingale
starting from $0$ its expectation is 0.
Thus, if we let
\[
m_r(t,x) := \E X_t(x)^r,   
\]
we have
\[
m_r(t,x) = x^r + \alpha_r \int_0^t (m_{r-1}(s,x)-m_r(s,x)) ds.
\]
Thus, $m_1(t,x) = x$, as expected, and
\[
\frac{\partial}{\partial t} m_r(t,x) = \alpha_r
(m_{r-1}(t,x)-m_r(t,x)), \quad r=2,3,\ldots
\]
Defining the Laplace transform
\[
\widehat m_r(s,x) = \int_0^\infty e^{-st} m_r(t,x) dt,
\]
and using integration by parts to see that
$\int_0^t e^{-st} \frac{\partial}{\partial t} m_r(t,x) dt
= s \widehat m_r(s,x) - m_r(0,x) = s\, \widehat m_r(s,x) - x^r$
we have
\[
s\,\widehat m_r(s,x)- x^r = \alpha_r (\widehat m_{r-1}(s,x)-\widehat m_r(s,x)).
\]
Iterating this easy recursion yields
\[
\widehat m_r(s,x)
= \sum_{i=1}^r
\frac{\alpha_r}{s+\alpha_r} \cdots \frac{\alpha_{i+1}}{s+\alpha_{i+1}}
\frac{x^i}{s+\alpha_i}
= \sum_{i=1}^r A_{i,r} \sum_{j=i}^r \frac{1/B_{i,j,r}}{s+\alpha_j}\, x^i,
\]
where the second equality was obtained by partial fraction expansion
(and the notation is as in \eqref{coeff}).
Since the inverse Laplace transform of $1/(s+\alpha_j)$ is $e^{-\alpha_j t}$,
the claim follows.
\end{proof}
\begin{remark}
Formula \eqref{BIR} was proved by Kelisky and Rivlin \cite[Eq.\ (3.13)]{KR67}
and Karlin and Ziegler \cite[Eq.\ (1.13)]{KZ70}
by entirely different methods.
(the latter paper contains a typo in the formula).
Eq.\ (3.13) of \cite{KR67} reads:
\begin{equation}
\label{BIR2}
b_{i,r}(t) = \frac{i}{r}  {r\choose i}^{2}\sum _{j=i}^{r}
\frac {\displaystyle  \left( -1 \right) ^{i+j} {r-i\choose j-i}^{2}{
}}{\displaystyle {2\,j-2\choose j-i}{j+r-1
\choose r-j}}\,
e^{-\frac{1}{2}j (j-1) t}.
\end{equation}
Comparing this with \eqref{BIR} and \eqref{coeff} we obtain
\[
\frac{\displaystyle \prod_{k=i+1}^r \binom{k}{2}}
{\displaystyle \prod_{k=i, k\neq j}^r \left[\binom{k}{2} - \binom{j}{2}\right]}
= \frac{i}{r}  {r\choose i}^2
\frac{\displaystyle (-1)^{i+j} {r-i\choose j-i}^2}
{\displaystyle {2\,j-2\choose j-i}{j+r-1 \choose r-j}},
\]
valid for any integers $i,j,r$ with $1 \le i \le j \le r$.
This equality can be verified directly by simple algebra.
\end{remark}

\section{Convergence rate to Bernstein's theorem: Voronovskaya's theorem}
\label{Vsec}
An important result in the theory of approximation of continuous
functions is Voronovskaya's theorem \cite{VOR32}.
It is the simplest example of {\em saturation}, namely that, for certain
operators, convergence cannot be too fast even for very smooth functions.
See DeVore and Lorentz\cite[Theorem 3.1]{DL93}.
Voronovskaya's theorem gives a rate of convergence to Bernstein's theorem.
From a probabilistic point of view, the theorem is nothing 
else but the convergence of the generator of the discrete Markov
chain to the generator of the Wright-Fisher diffusion.
We shall not use anything from the theory of generators, but we shall
give an independent probabilistic proof below for
$C^2$ functions $f$, including a slightly improved form under the assumption
that $f''$ is Lipschitz. In this case, its Lipschitz constant is
\[
\Lip(f'') := \sup_{x \neq y} |f''(x)-f''(y)|/|x-y|.
\]
Recall that $\LL f$ is defined by \eqref{generator}.
\begin{theorem}[Voronovskaya, 1932]
\label{vorthm}
For any $f \in C^2[0,1]$, 
\begin{equation}
\label{genconv}
\lim_{n \to \infty} \max_{x \in [0,1]}| n (B_n f(x) - f(x)) - \LL f(x) |=0.
\end{equation}
If moreover $f''$ is Lipschitz then, 
for any $n\in\N$,
\begin{equation}\label{sconv}
\max_{x \in [0,1]}| n (B_n f(x) - f(x)) - \LL f(x) |\le
\frac{\Lip(f'')}{16 \cdot 3^{1/4}} \, n^{-1/2}
\end{equation}
\end{theorem}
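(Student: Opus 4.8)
The plan is to exploit the probabilistic representation $B_n f(x) = \E f(S_n/n)$ with $S_n = S_n(x)$ binomial$(n,x)$, and to Taylor-expand $f$ around $x$ to second order. Writing $\Delta := S_n/n - x$, we have $\E\Delta = 0$ and $\E\Delta^2 = x(1-x)/n$, so a second-order Taylor expansion with remainder gives
\[
B_n f(x) - f(x) = \E\Big[f'(x)\Delta + \tfrac12 f''(x)\Delta^2 + R(x,\Delta)\Big]
= \tfrac{1}{n}\cdot\tfrac12 x(1-x) f''(x) + \E R(x,\Delta),
\]
where $R(x,\Delta) = \tfrac12\big(f''(\xi) - f''(x)\big)\Delta^2$ for some $\xi$ between $x$ and $S_n/n$. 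Hence $n(B_n f(x) - f(x)) - \LL f(x) = n\,\E R(x,\Delta)$, and the whole theorem reduces to showing that $n\,\E R(x,\Delta) \to 0$ uniformly in $x$, with the quantitative rate $O(n^{-1/2})$ when $f''$ is Lipschitz.

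For the qualitative statement \eqref{genconv}: bound $|R(x,\Delta)| \le \tfrac12\, m''(|\Delta|)\,\Delta^2$ where $m''(\delta) := \max_{|u-v|\le\delta}|f''(u)-f''(v)|$ is the modulus of continuity of $f''$, which tends to $0$ since $f''$ is uniformly continuous on $[0,1]$. Then split the expectation over the event $\{|\Delta| \le \epsilon\}$ and its complement exactly as in Bernstein's proof recalled above: on the first event $|R| \le \tfrac12 m''(\epsilon)\Delta^2$, contributing at most $\tfrac12 m''(\epsilon)\cdot n^{-1}x(1-x)$; on the complement, $|R| \le \|f''\|\,\Delta^2$ but Chebyshev (or Hoeffding, \eqref{STEPexceed}) makes $\E[\Delta^2\1_{|\Delta|>\epsilon}]$ negligible relative to $n^{-1}$. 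Multiplying by $n$ and sending $n\to\infty$ then $\epsilon\to0$ gives the claim, uniformly in $x$ because all bounds are uniform.

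For the sharp rate \eqref{sconv}: now $|f''(\xi) - f''(x)| \le \Lip(f'')|\xi - x| \le \Lip(f'')|\Delta|$, so $|R(x,\Delta)| \le \tfrac12\Lip(f'')|\Delta|^3$ and therefore
\[
\big| n(B_n f(x)-f(x)) - \LL f(x)\big| \le \tfrac12\Lip(f'')\, n\,\E|\Delta|^3
= \tfrac12\Lip(f'')\, n^{-2}\,\E|S_n - nx|^3.
\]
The task is then to bound the third absolute central moment of a binomial. The crude route is $\E|S_n-nx|^3 \le (\E(S_n-nx)^4)^{3/4}$ by Jensen, using the known fourth central moment $\E(S_n-nx)^4 = nx(1-x)(1 + 3(n-2)x(1-x))\le nx(1-x) + 3n^2 x^2(1-x)^2$, giving $\E|\Delta|^3 = O(n^{-3/2})$ and hence an $O(n^{-1/2})$ bound overall; optimizing $x(1-x)\le 1/4$ and chasing the constants should land on $\Lip(f'')/(16\cdot 3^{1/4})\cdot n^{-1/2}$. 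Concretely, $\E|S_n-nx|^3 \le \big(nx(1-x) + 3n^2x^2(1-x)^2\big)^{3/4}$; with $p := x(1-x) \in [0,1/4]$ and $n\ge 1$ one has $np + 3n^2p^2 \le 4n^2p^2 + \text{(lower order)}$, and a careful treatment of the $n^{-2}(np + 3n^2p^2)^{3/4}$ expression, maximized over $p\le 1/4$, yields the stated constant.

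The main obstacle is the constant-chasing in \eqref{sconv}: getting precisely $1/(16\cdot 3^{1/4})$ requires being careful about whether one bounds $\E|S_n-nx|^3$ by $(\E(S_n-nx)^4)^{3/4}$ or by $(\E(S_n-nx)^2)^{1/2}\cdot\big(\E(S_n-nx)^4\big)^{1/4}$ (Cauchy–Schwarz), and about how the two terms $np$ and $3n^2p^2$ in the fourth moment are handled — keeping only the dominant $3n^2p^2$ term and controlling the correction uniformly in $n$ is where the factor $3^{1/4}$ and the denominator $16$ come from. The qualitative part \eqref{genconv} is routine given Bernstein's argument; it is the explicit Lipschitz bound that demands the delicate bookkeeping.
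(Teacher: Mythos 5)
Your qualitative part \eqref{genconv} is essentially the paper's argument (Taylor expansion around $x$, split over $\{|\Delta|\le\epsilon\}$ and its complement using the modulus of continuity of $f''$ and a tail bound), and it goes through — with the caveat that plain second-moment Chebyshev is \emph{not} enough for the tail term: $n\,\E[\Delta^2\1_{|\Delta|>\epsilon}]$ must be shown to vanish, which requires either Hoeffding (as in the paper, via \eqref{STEPexceed}) or a fourth-moment/Cauchy--Schwarz estimate, e.g.\ $\E[\Delta^2\1_{|\Delta|>\epsilon}]\le\P(|\Delta|>\epsilon)$ together with an exponential or $O(n^{-2})$ bound on that probability.

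The genuine gap is in \eqref{sconv}, and it is not mere ``constant-chasing'': your Lagrange-form remainder gives $|R(x,\Delta)|\le\tfrac12\Lip(f'')|\Delta|^3$, and from there the stated constant is unreachable. Indeed, even with the sharp moment bound used in the paper, $\E(S_n-nx)^4=nx(1-x)\bigl(1-6x(1-x)+3nx(1-x)\bigr)\le 3n^2/16$, Jensen gives $n\,\E|\Delta|^3\le n\,(\E\Delta^4)^{3/4}\le (3/16)^{3/4}n^{-1/2}$, so your route yields at best $\tfrac12(3/16)^{3/4}\Lip(f'')\,n^{-1/2}=\tfrac{3}{16\cdot3^{1/4}}\Lip(f'')\,n^{-1/2}$ — three times the claimed bound. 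The missing ingredient is the \emph{integral} form of the Taylor remainder: writing the error as $\int_x^{G_n(x)}(G_n(x)-t)\bigl(f''(t)-f''(x)\bigr)\,dt$ and using $|f''(t)-f''(x)|\le\Lip(f'')|t-x|$ gives
\[
\Lip(f'')\int_{x\wedge G_n(x)}^{x\vee G_n(x)}|G_n(x)-t|\,|t-x|\,dt=\tfrac16\Lip(f'')\,|G_n(x)-x|^3,
\]
i.e.\ a coefficient $\tfrac16$ rather than $\tfrac12$; combined with Jensen and $\mu(n,x)\le 3n^2/16$ this produces exactly $\tfrac16(3/16)^{3/4}=\tfrac{1}{16\cdot3^{1/4}}$. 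Your guess that the factor $3^{1/4}$ and the $16$ come from how the terms $np$ and $3n^2p^2$ are balanced is misplaced (and your bound $np+3n^2p^2$, which drops the $-6p$ term, actually exceeds $3n^2/16$ at $p=1/4$); the paper instead bounds $\mu(n,x)$ by its value at $x=1/2$ using concavity in $x$ for $n\ge2$, treating $n=1$ separately, and the $3^{1/4}$ arises from $(3/16)^{3/4}$ multiplied by the $1/6$ from the integral remainder.
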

\begin{proof}
Using Taylor's theorem with the remainder in integral form,
\[
f(G_n(x)) - f(x) = f'(x) (G_n(x)-x) + \int_x^{G_n(x)} (G_n(x)-t)f''(t)dt.
\]
Since $\int_x^{G_n(x)} (G_n(x)-t) dt = \half (G_n(x)-x)^2$, we have,
from \eqref{STEPvar}, $\E\int_x^{G_n(x)} (G_n(x)-t) dt=\half \frac{x(1-x)}{n}$.
Therefore,
\[
n(\E f(G_n(x)) - f(x)) - \half x(1-x) f''(x)
= n\, \E \int_x^{G_n(x)} (G_n(t)-t)\, (f''(t)-f''(x)) dt
=: n\, \E J_n(x).
\]
We estimate $\E J_n(x)$ by splitting the expectation as
\[
\E J_n(x) = \E[J_n(x); |G_n(x)-x|\le \delta]
+ \E[J_n(x); |G_n(x)-x| > \delta],
\]
where $\delta > 0$ is chosen by the uniform continuity of $f''$:
for $\epsilon > 0$ let $\delta$ be such that
$|f''(x)-f''(y)| \le \epsilon$ whenever $|x-y|\le \delta$.
Thus, $|J_n(x)| \le \epsilon |\int_x^{G_n(x)} (G_n(x)-t) dt|$
and so
\begin{multline*}
|\E [J_n(x); |G_n(x)-x|\le \delta]| 
\le \epsilon \E \left[ \left|\int_x^{G_n(x)} (G_n(x)-t) dt\right|;\,
|G_n(x)-x| \le \delta\right]
\le \epsilon \frac{x(1-x)}{n} \le \frac{\epsilon}{4n}.
\end{multline*}
On the other hand, with $\|f''\|=\max_{0\le x \le 1} |f''(x)|$,
we have $|J_n(x)| \le 2 \|f''\| 
|\int_x^{G_n(x)} (G_n(t)-t)dt | \le 2\|f''\|$, so
\[
|\E [J_n(x); |G_n(x)-x|> \delta]| 
\le 2 \|f''\| \P(|G_n(x)-x|> \delta)
\le 4 \|f''\| e^{-\delta n^2/2},
\]
by \eqref{STEPexceed}.
Hence
\[
|n(\E f(G_n(x)) - f(x)) - \half x(1-x) f''(x)|
\le \frac{\epsilon}{4} + 4 \|f''\| n e^{-\delta n^2/2}.
\]
Letting $n \to \infty$, and since $\epsilon >0$ is arbitrary, we
obtain the first claim \eqref{genconv}.
\\
\S Assume next that $f''$ is Lipschitz with Lipschitz constant $\Lip(f'')$.
Then
\begin{align}
|n(\E f(G_n(x)) - f(x)) - \half x(1-x) f''(x)|
& \le n \E  \int_{x\wedge G_n(x)}^{x\vee G_n(x)} |G_n(x)-t|\, 
|f''(t)-f''(x)| dt 
\nonumber
\\
& \le n \Lip(f'')\, 
\E  \int_{x\wedge G_n(x)}^{x\vee G_n(x)} |G_n(x)-t| \,|t-x| dt 
\nonumber
\\
& =\frac{1}{6} n \Lip(f'')\, \E |G_n(x)-x|^3
\nonumber
\\
& \le \frac{1}{6} n \Lip(f'')\, (\E (G_n(x)-x)^4)^{3/4}
\label{mmm}
\end{align}
We have $\E (G_n(x)-x)^4 = n^{-4} \E (S_n - nx)^4$, where
$S_n$ is a binomial random variable--see \eqref{binomdist}.
We can easily find (or look in a textbook) that
\[
\E(S_n-nx)^4 = n x(1-x) (1-6x+6x^2 + 3nx-3nx^2) =: \mu(n,x).
\]
Since $\mu(n,x)=\mu(n,1-x)$ and, for $n \ge 2$, the function
$\mu(n,x)$ is concave in $x$, it follows that $\mu(n,x)
\le \mu(n, 1/2) = n(2n-1)/16 \le 3n^2/16$.
On the other hand, for $n=1$, $\mu(1,x) \le 3/16$ for all $x$.
Thus, the last term of \eqref{mmm} is upper bounded by
\[
\frac{1}{6} n \Lip(f'') \left(\frac{\mu(n,x)}{n^4}\right)^{3/4}
\le \frac{\Lip(f'')}{6} n^{-1/2} (3/16)^{3/4} = \frac{\Lip(f'')}{16 \cdot 3^{1/4}} n^{-1/2}.
\]
\end{proof}

\begin{remark}
Probabilistically, the operator $B_n-I$, where $I$ is the identity
operator, maps a function $f \in C[0,1]$ to the function 
$y \mapsto \E f((G_n(y))-f(y) 
= \E\big[ f(H^{k+1}_n(x)) - f(H^k_n(x)) | H^k_n(x)=y\big]$ 
which is the expected change of the function $f$
(under the action of the chain) per unit time, conditional on the current state
being equal to $y$. Since the natural time scale is counted in time units
that are multiples of $1/n$, we can interpret $n(B_n-I)f$ as the expected change
of $f$ per unit of real time.
Thus, its ``limit'' $\LL$ should play a similar role for the diffusion.
And, indeed, it does, but we shall not use this here.
For further information on diffusions and their generators see,
e.g., Karlin and Taylor \cite{KT81}.
\end{remark}


\section{Joint limits}
\label{JOINTsec}
The goal of this section is a probabilistic
proof of Theorem \ref{JOINTLIMSthm}.
First notice that it suffices to prove \eqref{fidiconv}
for polynomial functions. Indeed, if $f$ is continuous on
$[0,1]$ and $\epsilon >0$,
there is a polynomial $h=B_k f$ such that $\|h-f\|\le \epsilon$
(by Bernstein's Theorem \ref{krthm}).
But then
\[
\|B^{[nt]}_n f - f\| \le 
\|B_n^{[nt]} f - B_n^{[nt]} h\| 
+\|\sP_t f - \sP_t h\| 
+\|B_n^{[nt]} h - \sP_t h\| \le
2\epsilon + \|B_n^{[nt]} h - \sP_t h\|,
\]
where we used the fact that both $B_n^{[nt]}$ and $\sP_t$ are defined
via expectations, and so $\|B_n^{[nt]} f - B_n^{[nt]} h\|
= \|B_n^{[nt]}(f-h)\| \le \|f-h\|$, and, similarly,
$\|\sP_t f - \sP_t h\| = \|\sP_t (f-h)\| \le \|f-h\|$.
Therefore, if $\|B_n^{[nt]} h - \sP_t h\| \to 0$ for
polynomial $h$ then  $\|B^{[nt]}_n f - f\| \to 0$ for any
continuous $f$.
Equivalently, we need to prove that
\begin{equation}
\label{fidiconvpoly}
\lim_{n \to \infty} \E h(H_n^{[nt]}(x)) = \E h(X_t(x)), \text{
uniformly in $x$}.
\end{equation}

Notice that $H_n^{[nt]}(x)$, $t \ge 0$, is not a Markov process.
However if $\Phi(t)$, $t\ge 0$, is a standard Poisson
process with rate $1$, starting from $\Phi(0)=0$, and independent
of everything else, then
\[
X^{(n)}_t(x) := H_n^{\Phi(nt)}(x), \quad t \ge 0,
\]
is a Markov process for each $n$.
Moreover, for all $f \in C^2[0,1]$,
\begin{equation}
\label{Apoi}
\left|\E f(H^{\Phi(nt)}_n(x)) - \E f(H^{[nt]}_n(x))\right| \to 0,
\quad \text{uniformly in $x$}.
\end{equation}
To see this, let $G_n^1, G_n^2, \ldots$ be i.i.d.\ copies of $G_n$,
as in \eqref{Amarkov}, and write the triangle inequality
\begin{equation}
\label{Gtri}
\left|\E f(G_n^2\comp G_n^1(x)) - f(x)\right| \le
\left|\E [f(G_n^2\comp G_n^1(x)) -  f(G_n^1(x))]\right|
+ \left|\E [f(G_n^1(x)) - f(x)]\right|.
\end{equation}
Since $\left|\E [f(G_n^2(y)) -  f(y)]\right| \le \|B_n f - f\|$ for all $y$,
and since $G^1$ is independent of $G^2$, we have that the first term
of the right side of \eqref{Gtri} is $\le \|B_n f - f\|$,
and so 
\[
\left|\E f(G_n^2\comp G_n^1(x)) - f(x)\right| \le 2 \|B_n f -f \|,
\quad \text{for all } 0 \le x \le 1.
\]
By the same argument, for $k< \ell$,
\[
\left|\E f(G_n^\ell \comp \cdots \comp G_n^{k+1}(y)) - f(y)\right|
\le (\ell-k) \|B_n f - f\|,
\quad \text{for all } 0 \le y \le 1.
\]
Since $H_n^k(x)$ is independent of $G_n^\ell \comp \cdots \comp G_n^{k+1}$,
we can replace the $y$ in the last display by $H_n^k(x)$ and obtain
\[
\left|\E [f(H_n^\ell(x)) - f(H_n^k(x))]\right| \le (\ell-k) \|B_n f - f\|,
\quad \text{for all } 0 \le x \le 1.
\]
Using the fact that the Poisson process $\Phi$ is independent
of everything else, we obtain
\begin{equation}
\label{chp}
\left|\E\{f(H_n^{\Phi(nt)}(x))-f(H_n^{[nt]}(x))\} \right|
\le \E\{|\Phi(nt)-[nt]|\}\,\|B_n f - f\|,
\quad \text{for all } 0 \le x \le 1.
\end{equation}
But $\E\{|\Phi(nt)-[nt]|\} 
\le \E\{|\Phi(nt)-nt|\} + 1
\le \sqrt{\E\{|\Phi(nt)-nt|\}^2} + 1
= \sqrt{nt}+1$,
while, from Voronovskaya's theorem, $n \|B_n f -f \| \to \|\LL f\|$.
Therefore the right-hand side of \eqref{chp} converges to $0$ as 
$n \to \infty$, and this proves \eqref{Apoi}.

Therefore \eqref{fidiconvpoly} will follow from
\begin{equation}
\label{fidiconv3}
\lim_{n \to \infty} \E h(X^{(n)}_t(x)) = \E h(X_t(x)),
\quad \text{uniformly in } x,
\end{equation}
for all polynomial $h$.

For each $s \ge 0$, define a random curve $Y^{(s)}_t$ that
follows $X^{(n)}$ up to time $s$ and then switches to $X$.
More precisely, define
\begin{equation}
\label{XX}
Y_t^{(s)} := \begin{cases}
X^{(n)}_t(x), & 0 \le t \le s \\
X_{t-s}(X^{(n)}_s(x)), & t \ge s
\end{cases}.
\end{equation}
See Figure \ref{gluing}.
\begin{figure}[t]
\begin{center}
\epsfig{file=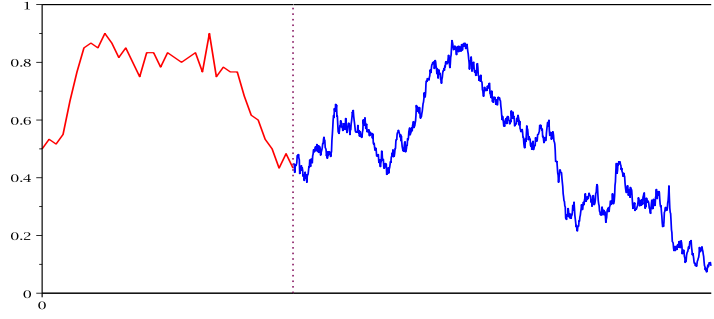,width=9.5cm,height=4.3cm}
\begin{minipage}{12cm}
\caption{\small\em 
A trajectory of (joined-by-straight-lines version of) the Markov chain $X\n$ 
followed by the diffusion $X$.
}
\label{gluing}
\end{minipage}
\end{center}
\end{figure}
It is here assumed that the Brownian motion $W$ 
driving the defining equation \eqref{SDEX} for the Wright-Fisher
diffusion is independent of all random variables
used for the construction of $X^{(n)}$.
Since, for any given initial state, the solution to \eqref{SDEX}
is unique, we may replace the initial state by a random variable
independent of the Brownian motion, and this is what we did in
the last formula.
Thus, if we prove that $\E h( Y^{(s)}_t)$ is differentiable with respect to $s$,
we shall have, for $t \ge s$,
\begin{equation}
\label{HOMO}
\E h(X_t(x)) - \E h(X^{(n)}_t(x)) 
= 
\E h(Y^{(0)}_t) - \E h(Y^{(t)}_t) 
= \int_0^t \pd{}{s} \E h(X_{s}(X^{(n)}_{t-s}(x)))\, ds.
\end{equation}
To show that the last derivative exists as well as estimate it, we
estimate $\pd{}{s} \E h(X_{s}(X^{(n)}_t(x)))$
and $\pd{}{t} \E h(X_{s}(X^{(n)}_t(x)))$.

Since $h$ is a polynomial, it follows that $\sP_s h(x) =
\E h(X_s(x))$ is  also a polynomial function of $x$
(see Proposition \ref{POLY}) and so the forward equation \eqref{FRONTeq}
holds:
\[
\pd{}{s} \E h(X_s(x)) = \LL \sP_s h(x).
\]
Therefore, for any random variable $Y$ with values in $[0,1]$
and
independent of $X_s$, we have
\[
\pd{}{s} \E \{h(X_s(Y))|Y\} = \LL \sP_s h(Y).
\]
Taking expectations of both sides and interchanging differentiation
and expectation (by the DCT) we have
\[
\pd{}{s} \E h(X_s(Y)) = \E \LL \sP_s h(Y).
\]
Setting $Y=X_t\n(x)$, we have
\begin{equation}
\pd{}{s} \E h(X_s(X\n_t(x))) = \E \LL (\mathsf P_s h)(X\n_t(x)).
\label{BACKb}
\end{equation}

Assume next that $f: [0,1]\to \R$ is any function. 
Using the identity
\[
\E [f(H_n^{k+1}(x)) - f(H_n^k(x)) | H_n^k(x)=y] = \E f(G_n(y)) - f(y)
= B_n f(y)-f(y),
\]
valid for any function $f$ and any $x$, together with the
fact that
$\P(\Phi(t+h) - \Phi(t)=1) = h + o(h)$, as $h \downarrow 0$,
we arrive at
\[
\pd{}{t} \E f(X^{(n)}_t(x)) 
= n \E [B_n f(X_t\n(x))-f(X_t\n(x))].
\]
Setting now $f = \sP_s h$ and observing that 
$\E (\sP_s h) (X_t\n(x)) = \E h(X_s(X_t\n(x)))$,
we arrive at
\begin{equation}
\pd{}{t} \E  f(X_s(X^{(n)}_t(x)))
= n \E[ B_n \sP_s h(X_t\n(x))-\sP_s h(X_t\n(x))] 
\label{FRONTb}
\end{equation}

Combining \eqref{BACKb} and \eqref{FRONTb} we have a formula
for the derivative appearing in the last term of \eqref{HOMO}:
\[
\pd{}{s} \E h(X_s(X^{(n)}_{t-s}(x)))
= \E F(X\n_{t-s}(x)),
\]
where
\[
F(y):= \LL(\mathsf P_s h)(y) 
- n[  B_n \mathsf P_s h(y) - (\mathsf P_s h)(y)]
\]
Assume now that $(\mathsf P_s h)''$ is Lipschitz with Lipschitz constant $L_s$.
Then, by \eqref{sconv},
\[
|F(y)| \le c_2 L_s n^{-1/2}, \quad 0 \le y \le 1,
\]
where $c_2=1/16\cdot 3^{1/4}$ and $L_s = \Lip((\sP_s h)'')$, and so
\[
\left|\E h(X\n_t(x)) - \E h(X_t(x)) \right|
\le 
\int_0^t \left|\E F (X\n_{t-s}(x)\right|\, ds 
\le c_2 n^{-1/2} \int_0^t L_s ds.
\]
By the formula for $\sP_s h$ when $h$ is a polynomial
(Proposition \ref{POLY}), it follows that the $\int_0^t L_s ds$
is a finite constant.
Hence \eqref{fidiconv3} has been proved.
\qed

\begin{corollary}
\label{cor1}
With $f_r(x) = x^r$,
\[
\lim_{n \to \infty} B_n^{[nt]} f_r(x) = \sum_{i=1}^r b_{i,r}(t)\, x^i,
\]
where the $b_{i,r}(t)$ are given by \eqref{BIR2}.
\end{corollary}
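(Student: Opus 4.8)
The plan is to obtain the corollary as the special case $f = f_r$ of the joint limits theorem, fed through the explicit moment computation for the Wright--Fisher diffusion. Nothing new needs to be proved; the statement is a corollary in the literal sense, assembling three facts already available in the paper.

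First I would observe that $f_r(x) = x^r$ lies in $C[0,1]$ (indeed in $C^2[0,1]$), so Theorem \ref{JOINTLIMSthm} applies verbatim and yields
\[
\lim_{n \to \infty} B_n^{[nt]} f_r(x) = \E f_r(X_t(x)) = \E X_t(x)^r, \quad \text{uniformly in } x.
\]
Second, I would invoke Proposition \ref{POLY} to identify the right-hand side: $\E X_t(x)^r = \sum_{i=1}^r b_{i,r}(t)\, x^i$, where $b_{i,r}(t)$ is the closed form \eqref{BIR} with coefficients \eqref{coeff}. Third, the only remaining point is that the numbers $b_{i,r}(t)$ of \eqref{BIR} coincide with those written in the Kelisky--Rivlin form \eqref{BIR2}; this is exactly what the Remark following Proposition \ref{POLY} records, the identification being reduced there to an elementary binomial identity valid for $1 \le i \le j \le r$. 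Substituting \eqref{BIR2} for $b_{i,r}(t)$ in the displayed limit gives precisely the assertion of the corollary.

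I do not expect any genuine obstacle: each of the three ingredients---membership of $f_r$ in $C[0,1]$, Theorem \ref{JOINTLIMSthm}, and Proposition \ref{POLY} together with its Remark---has already been established. The only minor bookkeeping is to note that the sum runs from $i = 1$ rather than $i = 0$; this is automatic from Proposition \ref{POLY}, where $m_1(t,x) = x$ seeds the recursion so that only powers $x^i$ with $i \ge 1$ ever appear (equivalently, the diffusion started at $x \in (0,1)$ stays strictly interior, so there is no constant term). Hence the corollary follows.
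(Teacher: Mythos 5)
Your proposal is correct and takes essentially the same route as the paper, which treats the corollary as an immediate consequence of Theorem \ref{JOINTLIMSthm} applied to $f_r$, Proposition \ref{POLY}, and the Remark identifying \eqref{BIR} with \eqref{BIR2}. One parenthetical aside is wrong, though not load-bearing: the Wright--Fisher diffusion does \emph{not} stay strictly interior (it is absorbed at $0$ or $1$ in finite time almost surely); the absence of a constant term is simply part of the statement of Proposition \ref{POLY} (or follows from $X_t(0)\equiv 0$), so no appeal to interiority is needed.
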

This is Theorem 2 in Kelisky and Rivlin \cite{KR67}.

\begin{corollary}
With $f_\theta(x) = e^{-\theta x}$, we have
\[
\lim_{n \to \infty} B_n^{[nt]} f_\theta(x)
= \E e^{-\theta X_t(x)} =: H(t,x,\theta),
\]
where $H(t,x,\theta)$ satisfies
$H(0,x,\theta)= e^{-\theta x}$ and the PDE
\[
\frac{\partial H}{\partial t} =
-\frac{\theta^2}{2} \frac{\partial H}{\partial \theta}
-\frac{\theta^2}{2} \frac{\partial^2 H}{\partial \theta^2}.
\]
\end{corollary}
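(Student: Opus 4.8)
The plan is to treat the two assertions separately. The convergence $\lim_{n\to\infty} B_n^{[nt]} f_\theta(x) = \E e^{-\theta X_t(x)}$ is an immediate instance of the joint limits Theorem \ref{JOINTLIMSthm} applied to the continuous function $f_\theta(x) = e^{-\theta x}\in C[0,1]$; no further work is needed there, and the initial condition $H(0,x,\theta) = e^{-\theta x}$ follows from $X_0(x)=x$. So the substance of the statement is the partial differential equation satisfied by $H(t,x,\theta) := \E e^{-\theta X_t(x)}$.

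To obtain the PDE I would apply the forward equation \eqref{BACKeq} (which comes from taking expectations in It\^o's formula \eqref{ITO} and differentiating in $t$) to $f=f_\theta$; this is legitimate since $f_\theta \in C^2[0,1]$. As $f_\theta'(x) = -\theta e^{-\theta x}$ and $f_\theta''(x)=\theta^2 e^{-\theta x}$, the generator \eqref{generator} gives $\LL f_\theta(x) = \half\,\theta^2 x(1-x) e^{-\theta x}$, whence
\[
\pd{H}{t} = \E(\LL f_\theta)(X_t(x)) = \frac{\theta^2}{2}\,\E\!\left[X_t(x)\,(1-X_t(x))\, e^{-\theta X_t(x)}\right].
\]

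The remaining step is to rewrite the right-hand side in terms of $H$ itself. Differentiating $H(t,x,\theta) = \E e^{-\theta X_t(x)}$ under the expectation sign — legitimate because $X_t(x)\in[0,1]$, so $|X_t(x)^k e^{-\theta X_t(x)}| \le e^{|\theta|}$ and the dominated convergence theorem applies — gives $\pd{H}{\theta} = -\E[X_t(x)\, e^{-\theta X_t(x)}]$ and $\frac{\partial^2 H}{\partial\theta^2} = \E[X_t(x)^2\, e^{-\theta X_t(x)}]$. Hence $\E[X_t(x)(1-X_t(x)) e^{-\theta X_t(x)}] = -\pd{H}{\theta} - \frac{\partial^2 H}{\partial\theta^2}$, and substituting back yields
\[
\pd{H}{t} = -\frac{\theta^2}{2}\,\pd{H}{\theta} - \frac{\theta^2}{2}\,\frac{\partial^2 H}{\partial\theta^2},
\]
as asserted.

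There is essentially no hard step here: the only point requiring care is the interchange of $\partial_\theta$ with the expectation, and this is routine on the compact state space $[0,1]$. One might add the remark that this linear parabolic PDE, together with the initial condition $H(0,x,\theta)=e^{-\theta x}$, determines $H$ uniquely, which is consistent with the explicit answer obtained by expanding $e^{-\theta x}$ in powers of $x$ and applying Proposition \ref{POLY} termwise.
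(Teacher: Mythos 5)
Your proof is correct and follows exactly the route the paper intends: the paper states this corollary without proof, as an immediate consequence of Theorem \ref{JOINTLIMSthm} applied to $f_\theta\in C[0,1]$ together with the forward equation \eqref{BACKeq} for the $C^2$ function $f_\theta$, with the moments $\E[X_t e^{-\theta X_t}]$ and $\E[X_t^2 e^{-\theta X_t}]$ rewritten as $-\partial_\theta H$ and $\partial^2_\theta H$. Your justification of the differentiation under the expectation is the only point needing care, and you handle it correctly.
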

The solution to this PDE can be expressed in terms of modified Bessel
functions. We shall not pursue this further here.

\section{Further comments}
We provided a fully stochastic explanation of the phenomenon
of convergence of $k$ iterates of Bernstein operators of degree $n$
when $n$ and $k$ tend to infinity in different ways. This problem
has received attention in the theory of approximations of continuous
functions. We showed that the problem can be interpreted naturally
via stochastic processes. In fact, these processes, the Wright-Fisher
model and Wright-Fisher diffusion are very basic in probability theory
and are well-understood.

There are a number of interesting directions that open up. 
The most crucial thing is that $B_n f(x)$ is the expectation of
a random variable. We can construct different operators
by using different random variables. See, e.g., Karlin and Ziegler
\cite[Eq.\ (1.5)]{KZ70} for an operator related to a Poisson
random variable.
Whereas Karlin and Ziegler study iterates of these operators, 
their approach is more analytical than probabilistic. 
By using approximations by stochastic differential equations,
and taking advantage of the tools stochastic calculus,
it is possible to derive convergence rates and other interesting results,
including explicit formulas, such as the formula for 
$\lim_{n \to \infty}B_n^{[nt]} f_r(x)$ (Corollary \ref{cor1} and formula 
\eqref{BIR2}), obtained here by a simple application of the It\^o formula.

In Section \ref{interlude} we explained the most standard Wright-Fisher
model where mutations are not allowed. 
If we assume that the probability that a gene of one type
changing to another type also depends on the 
number of genes of each type, in a possibly nonlinear fashion,
we obtain a more general model.
Mathematically, this is captured by letting
$h:[0,1]\to [0,1]$ be an appropriate function,
and by considering the Markov chain obtained by
iterating independent copies of the function $x\mapsto G_n(h(x))$,
that is the Markov chain 
$G_n^k \comp h \comp \cdots \comp G_n^1 \comp h(x)$, $k\in \N$.
For the case $h(x) =ax+b$ with $a,b$ chosen so that $0\le h(x) \le 1$,
see Ethier and Norman \cite{EN77}.



\appendix
\section*{Appendix}

\subsection*{A1$\quad$ Composing random maps}
By a random map $G : T\to S$ we mean a random function
from some probability space $\Omega$ into a subspace of $S^T$ of
functions from $T$ to $S$.
In rigorous probability theory, this means that all sets involved
are equipped with $\sigma$-algebras in a way that $G(\omega) \in T^S$ is 
a measurable function of $\omega$. As a concrete example,
in this paper, we considered $S=T=[0,1]$ and $\Omega = [0,1]^n$.
We equipped all sets with the Lebesgue measure. For $\omega=
(\omega_1, \ldots, \omega_n) \in \Omega$, the map $U_i: \omega \mapsto \omega_i$
is a random variable with the uniform distribution. Moreover,
$U_1, \ldots, U_n$ are independent.
We defined $G(\omega)$ the function
\[
G(\omega)(x) = \frac{1}{n} \sum_{i=1}^n \1_{\omega_i \le x}.
\]
By taking a different $\Omega$, we are able to construct two (or more)
random maps $G_1, G_2$ that are independent. 
As usual in probability, we suppress the symbol $\omega$ from the definition
of $G$. When we talk about the composition $G_2 \comp G_1$
of two random functions as above, we are talking about the composition
with respect to $x$. That is,
$G_2\comp G_1(\omega)(x) := G_2(\omega)(G_1(\omega)(x))$.
If $f$ is a deterministic function we define the operator $f \mapsto T_i f$
by $T_i f(x) := \E f(G_i(x))$, $i=1,2$.
We can then easily see that
\[
\E f(G_2\comp G_1(x)) = T_1 \comp T_2 f(x),
\]
the point being that the order of composition outside the
expectation is the reverse of the one inside.
This is why, for instance, $\E f(G_n^k \comp \cdots \comp G_n^1(x))
= B_n \comp \cdots \comp B_n f(x)$ in eq.\ \eqref{Bkf}.
Another example of a random map is the $x \mapsto X^{(n)}_s(x)$,
where $X^{(n)}$ is the Markov chain constructed in Section \ref{JOINTsec}.
For fixed $s$ and $n$, the initial state $x$ is mapped into the
state $X^{(n)}_s(x)$ at time $s$. 
And yet another random map is $x \mapsto X_t(x)$, where $X_t$, $t \ge 0$, is the
Wright-Fisher diffusion. 
We composed these random maps in \eqref{XX}, after assuming that 
they are independent.

\subsection*{A2$\quad$ Hoeffding's inequality}
Let $X_1, \ldots, X_n$ be independent random variables with zero mean
with values between $-c$ and $c$ for some $c >0$.
Then, for $-c \le x \le c$,
\begin{equation}
\label{hoeffding}
\P(|S_n|> nx) \le 2 e^{-n x^2/2c}.
\end{equation}
This inequality, due to Hoeffding \cite{HOE63}, 
is very well-known and can be found in many probability 
theory books. We just prove it below for completeness.
\begin{proof}
Since $e^{\theta x}$ is a convex function of $x$
for all $\theta > 0$ and so if we write $X_i$ as a convex combination
of $-c$ and $c$,
\[
X_i = \frac{c-X_i}{2c} (-c) + \frac{X_i+c}{2c}  c,
\] 
we obtain
\[
e^{\theta X_i}\le\frac{c-X_i}{2c} e^{-\theta c} + \frac{X_i+c}{2c} e^{\theta c},
\]
Hence
\[
\E e^{\theta X_i} 
= \frac{1}{2} e^{-\theta c} + \frac{1}{2} e^{\theta c}
=\cosh (\theta c) \le e^{\theta^2 c^2/2}.
\]
Here we used the inequality $\cosh t \le e^{t^2/2}$, valid for all real $t$.
This implies that $\E e^{\theta S_n} = \prod_i \E e^{\theta X_i}
\le e^{n \theta^2 c^2/2}$.
Hence, for any $\theta > 0$,
\[
\P(S_n > nx)=\P(e^{\theta S_n} > e^{\theta n x}) 
\le e^{-\theta n x}\E e^{\theta S_n}
\le e^{-\theta n x} e^{ n\theta^2 c^2/2}
= e^{\theta n(\theta c^2/2-x)}.
\]
The last exponent is minimized for $\theta=\theta^* = x/c^2$ and its minimum
value is $\theta^* n(\theta^* c^2/2-x) = -x^2 n/2c^2$. Hence
$\P(S_n > nx) \le e^{-n x^2/2c}$. Reversing the roles of $X_i$ and $-X_i$,
we have $\P(-S_n < -nx) \le e^{-n x^2/2c}$ also.
\end{proof}


\subsection*{A3$\quad$  Convergence of Markov chains to diffusions}
Recall that sequence of real-valued random variables $Z:=(Z_k, k =0,1,\ldots)$
is said to be a time-homogeneous Markov chain if
$\P(Z_{k+1} \le y \mid Z_k=x, Z_{k-1} \ldots, Z_0)
= \P(Z_{k+1} \le y \mid Z_k=x)
= \P(Z_1 \le y\mid Z_0=x)$, for all $k$, $x$ and $y$.
Often, Markov chains depend on a parameter which, without loss of
generality, we can take to be an integer $n$.
Let $Z_n:=(Z_{n,k}, k =0,1,\ldots)$, $n=1,2,\ldots$ be such a sequence.
Frequently, it makes sense to consider this process at a time scale
that depends on $n$. The problem then is to choose (if any) a sequence $\tau_n$
of positive real numbers such that $\lim_{n \to \infty} \tau_n = \infty$
and study instead the random function
\[
Z_{n,{[\tau_n t]}}, \quad t \ge 0,
\]
which, hopefully, may have a limit in a certain sense.
This can be made continuous by linear interpolation. That is, consider
the random function $Z_n(t)$, $t \ge 0$, defined by
\begin{equation}
\label{Xinterp}
Z_n(t) := Z_{n,{[\tau_n t]}}
+ (\tau_n t-[\tau_n t]) (Z_{n,{[\tau_n t]+1}}-Z_{n,{[\tau_n t]}}).
\end{equation}
We seek conditions under which the sequence of random continuous
functions $(Z_n(t), t \ge 0)$ converges weakly to a 
random continuous function $(Z(t), t \ge 0)$.

To define weak convergence, we first define the notion of convergence
in $C[0,\infty)$ by saying that a sequence of continuous functions $f_n$ converges
to a continuous function  (write $f_n \to f$) if $\sup_{0\le t \le T}|f_n(t)-f(t)| \to 0$,
as $n \to \infty$, for all $T \ge 0$.
Then we say that $\phi:C[0,\infty) \to \R$ is continuous if,
for all continuous functions $f$, $\phi(f_n) \to \phi(f)$
whenever $f_n \to f$.
Finally, we say that the sequence of random continuous functions $Z_n$ converges
weakly \cite{EK86} to the random continuous function $Z$ if $\E \phi(Z_n) \to \E \phi(Z)$,
as $n \to \infty$, for all continuous $\phi:C[0,\infty) \to \R$.

%
We now quote, without proof, a useful theorem that enables one to deduce
weak convergence to a random continuous function that satisfies a stochastic
differential equation. For the whole theory we refer to Stroock and Varadhan
\cite[Chapter 11]{SV79}.
\begin{theorem} 
\label{MC2DIFF}
Let, for each $n \in \N$,
$Z_{n,k}$, $k=0,1,\ldots$, be a sequence of real random variables forming
a time-homogeneous Markov chain. 
Assume there is a sequence $\tau_n$, with $\tau_n \to \infty$, such that
\begin{equation}
\label{STEPVAR}
\tau_n\, \E[(Z_{n,k+1}-Z_{n,k})^2 \mid Z_{n,k}\n=x] \to \sigma^2(x),
\end{equation}
uniformly over $|x| \le R$ for all $R>0$,
for some continuous function $\sigma^2(x)$.
For the same $\tau_n$, we also assume that
\begin{equation}
\label{STEPMEAN}
\tau_n\, \E[Z_{n,k+1}-Z_{n,k} \mid Z_{n,k}=x] \to b(x),
\end{equation}
uniformly over $|x| \le R$ for all $R>0$,
for some continuous function $b(x)$. Assume also that,
for all $R> 0$, there are positive constants $c_1, c_2, p$, such that
\begin{equation}
\label{STEPEXCEED}
\tau_n \P( |Z_{n,k+1}-Z_{n,k}| > \epsilon \mid Z_{n,k}=x)
\le c_1 e^{-c_2 \epsilon^p n},
\end{equation}
for all $\epsilon > 0$ and all $|x| \le R$.
Finally, assume that there is $x_0\in \R$ such that
$\P(|Z_{n,0} - x_0|>\epsilon) \to 0$, for all $\epsilon > 0$.
Then, as $n \to \infty$, the sequence of random continuous functions 
defined as in \eqref{Xinterp} converges weakly to
the solution of the stochastic differential equation
\[
dZ(t) = b(Z(t)) dt + \sigma(Z(t)) d W_t, \quad Z_0=x_0,
\]
provided that this equation admits a unique strong solution.  
\end{theorem}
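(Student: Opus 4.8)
The plan is to prove the theorem by the \emph{martingale problem} method of Stroock and Varadhan, organized around three pillars: convergence of the rescaled discrete generators, tightness of the interpolated processes, and uniqueness of the limiting martingale problem (inherited from the assumed strong well-posedness of the SDE). Introduce the second-order operator
\[
\GG f(x) := b(x)\, f'(x) + \tfrac12\, \sigma^2(x)\, f''(x),
\]
whose coefficients are exactly those delivered by \eqref{STEPMEAN} and \eqref{STEPVAR}. First I would set up the discrete Dynkin martingale: writing $\Delta_{n,j} := Z_{n,j+1}-Z_{n,j}$ and $\AA_n f(x) := \E[f(Z_{n,j+1})-f(Z_{n,j}) \mid Z_{n,j}=x]$ for the one-step increment operator, the Markov property makes
\[
f(Z_{n,k}) - f(Z_{n,0}) - \sum_{j=0}^{k-1} \AA_n f(Z_{n,j})
\]
a martingale for every $f \in C_c^\infty(\R)$. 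A second-order Taylor expansion gives
\[
\tau_n \AA_n f(x) = f'(x)\,\tau_n\E[\Delta_{n,j}\mid Z_{n,j}=x] + \tfrac12 f''(x)\,\tau_n\E[\Delta_{n,j}^2\mid Z_{n,j}=x] + R_n(x),
\]
with remainder $|R_n(x)| \le \tfrac16\|f'''\|\,\tau_n\E[|\Delta_{n,j}|^3\mid Z_{n,j}=x]$. The hypotheses \eqref{STEPMEAN} and \eqref{STEPVAR} make the first two terms converge, uniformly on compacts, to $b\,f'$ and $\tfrac12\sigma^2 f''$; the tail bound \eqref{STEPEXCEED} forces the remainder to vanish, since $\tau_n\E[|\Delta_{n,j}|^3\mid x] = \int_0^\infty 3\epsilon^2\,\tau_n\P(|\Delta_{n,j}|>\epsilon\mid x)\,d\epsilon \le \int_0^\infty 3c_1\epsilon^2 e^{-c_2\epsilon^p n}\,d\epsilon \to 0$. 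Hence $\tau_n\AA_n f \to \GG f$ uniformly on compacts.

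Second, I would establish tightness of the interpolated curves $(Z_n(t))$ from \eqref{Xinterp} in $C[0,\infty)$. Compact containment, namely $\sup_n\P(\sup_{t\le T}|Z_n(t)|>K)\to 0$ as $K\to\infty$, follows from the generator estimate applied to suitable test functions together with a localization argument that uses the non-explosiveness of the limiting solution. For the continuity estimate, \eqref{STEPVAR} shows the accumulated variance over a time window of length $h$ is $O(h)$, while \eqref{STEPEXCEED} excludes individual macroscopic increments; packaging these into a fourth-moment Kolmogorov bound (or verifying Aldous' criterion) yields both tightness and the fact that every weak limit has continuous paths.

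Third, along any weakly convergent subsequence $Z_n \Rightarrow Z$, the uniform-on-compacts convergence $\tau_n\AA_n f \to \GG f$ and the continuity of the relevant path functionals let me pass to the limit in the discrete martingale, so that $f(Z(t)) - f(Z(0)) - \int_0^t \GG f(Z(s))\,ds$ is a martingale for every $f \in C_c^\infty(\R)$; that is, $Z$ solves the $\GG$-martingale problem with initial law $\delta_{x_0}$, the initial condition being pinned down by the hypothesis $\P(|Z_{n,0}-x_0|>\epsilon)\to 0$. Since the SDE is assumed to possess a unique strong solution, weak existence together with pathwise uniqueness yields uniqueness in law by Yamada--Watanabe, which is equivalent to well-posedness of the $\GG$-martingale problem. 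Therefore all subsequential limits coincide with the law of the SDE solution, and the full sequence converges weakly.

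The main obstacle is the tightness step, and within it the need to prove that limit points have continuous paths: the prelimit processes are piecewise-linear interpolations of a chain that genuinely jumps, so one must quantitatively rule out large increments over an interval containing $O(\tau_n T)$ steps. This is precisely where \eqref{STEPEXCEED} is indispensable, because the prefactor $\tau_n$ multiplying an exponentially small jump probability survives summation over all steps in $[0,T]$ and still tends to zero; the same estimate is what annihilates the third-order Taylor remainder in the generator computation, so the two most delicate points of the argument are controlled by a single hypothesis.
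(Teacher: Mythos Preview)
The paper does not prove this theorem: immediately before the statement it says ``We now quote, without proof, a useful theorem\ldots'' and directs the reader to Stroock and Varadhan \cite[Chapter 11]{SV79}, remarking afterwards that the hypotheses given are stronger than those of \cite[Theorem 11.2.3]{SV79}. There is therefore no in-paper proof to compare against.

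Your sketch is the standard Stroock--Varadhan martingale-problem argument, which is precisely what the cited reference develops, so in spirit you are reproducing what the paper defers to. The outline is sound, and your use of \eqref{STEPEXCEED} to kill both the third-order Taylor remainder and the large-increment contribution is the right observation. Two points are worth flagging. First, the hypotheses \eqref{STEPVAR}, \eqref{STEPMEAN}, \eqref{STEPEXCEED} are only assumed for $|x|\le R$, so both the remainder estimate and the jump control must be performed \emph{after} localizing by the first exit time from $[-R,R]$; you mention localization in passing, but it is load-bearing throughout the argument, not only for compact containment. Second, compact containment itself is the least routine part of your plan: $b$ and $\sigma$ are assumed merely continuous with no growth condition, and non-explosion of the limiting SDE does not by itself yield uniform-in-$n$ bounds for the prelimit chains. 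In the paper's sole application the state space is $[0,1]$ and this issue evaporates, which is presumably why the authors were content to quote the result rather than prove it in full generality.
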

This conditions in Theorem \ref{MC2DIFF} are much stronger
than those of \cite[Theorem 11.2.3]{SV79}.
However, it is often the case that the conditions can be verified.
This is indeed the case in this paper.

\bigskip
\noindent{\em Acknowledgments.}
The authors would like to thank Andrew Heunis and Svante Janson
for their comments on this paper.

\small

\end{document}